\documentclass[11pt]{amsart}

\usepackage{enumitem, amsmath, amsthm, amsfonts, amssymb, xy,  mathrsfs, graphicx, lscape, mathdots}
\usepackage[usenames, dvipsnames]{color}
\usepackage[margin=1in]{geometry} 
\usepackage{tikz}
\usetikzlibrary{matrix,arrows}
\usepackage{tabularx, array}

\usepackage{tikz-cd}
\usepackage{xcolor}
\usepackage{float}
% color pre-sets: black, blue, brown, cyan, darkgray, gray, green, lightgray, lime, magenta, olive, orange, pink, purple, red, teal, violet, white, yellow.
\usepackage{stackengine}

\numberwithin{equation}{section}
\newtheorem{theorem}[equation]{Theorem}

\newtheorem{proposition}[equation]{Proposition}
\newtheorem{lemma}[equation]{Lemma}

\newtheorem{notation}[equation]{Notation}

\theoremstyle{definition}
\newtheorem{rmk}[equation]{Remark}
\newenvironment{remark}[1][]{\begin{rmk}[#1] \pushQED{\qed}}{\popQED \end{rmk}}
\newtheorem{eg}[equation]{Example}

\newtheorem{defn}[equation]{Definition}
\newenvironment{definition}[1][]{\begin{defn}[#1]\pushQED{\qed}}{\popQED \end{defn}}
\newtheorem{ques}[equation]{Question}

\usepackage[in]{fullpage}
\addtolength{\oddsidemargin}{.25in}
\addtolength{\evensidemargin}{.25in}
\addtolength{\textwidth}{-.5in}

\addtolength{\topmargin}{0in}
\addtolength{\textheight}{0in}

%\linespread{1.3}

%\usepackage[colorlinks=true, pdfstartview=FitV, linkcolor=blue, citecolor=blue, urlcolor=blue]{hyperref}
\usepackage[colorlinks=true, pdfstartview=FitV, linkcolor=black, citecolor=black, urlcolor=black]{hyperref}

\newcommand{\ZZ}{\mathbb{Z}}

\newcommand{\RR}{\mathbb{R}}
\newcommand{\ddim}{\underline{\dim}}

% Preferences
\renewcommand{\phi}{\varphi}

\newcommand{\xto}[1]{\xrightarrow{#1}}

%Math operators
\DeclareMathOperator{\rep}{rep}
\DeclareMathOperator{\Hom}{Hom}

%commenting and markup

\newcommand{\comment}[1]{}

%defines command \Ddots for diag SW to NE dots
\makeatletter
\def\Ddots{\mathinner{\mkern1mu\raise\p@
\vbox{\kern7\p@\hbox{.}}\mkern2mu
\raise4\p@\hbox{.}\mkern2mu\raise7\p@\hbox{.}\mkern1mu}}
\makeatother

%https://q.uiver.app/
\usetikzlibrary{decorations.pathmorphing}

\DeclareMathOperator{\coker}{coker}
\DeclareMathOperator{\op}{op}
\DeclareMathOperator{\rad}{rad}
\DeclareMathOperator{\AR}{AR}

\begin{document}
\title{Total stability and Auslander-Reiten theory for Dynkin quivers}

\author{Yariana Diaz}
\address{Macalester College, Department of Mathematics, Statistics, and Computer Science, St. Paul, MN, USA}
\email[Yariana Diaz]{ydiaz@macalester.edu}

\author{Cody Gilbert}
\address{Saint Louis University, Department of Mathematics, St. Louis, USA}
\email[Cody Gilbert]{cody.gilbert@slu.edu}

\author{Ryan Kinser}
\address{University of Iowa, Department of Mathematics, Iowa City, USA}
\email[Ryan Kinser]{ryan-kinser@uiowa.edu}
\thanks{This work was supported by a grant from the Simons Foundation (636534, RK).  This material is based upon work supported by the National Science Foundation under Award No. DMS-2303334.}

\begin{abstract}
This paper concerns stability functions for Dynkin quivers, in the generality introduced by Rudakov.
We show that relatively few inequalities need to be satisfied for a stability function to be totally stable (i.e. to make every indecomposable stable).
Namely, a stability function $\mu$ is totally stable if and only if $\mu(\tau V) < \mu(V)$ for every almost split sequence $0 \to \tau V \to E \to V \to 0$ where $E$ is indecomposable.  These can be visualized as those sequences around the ``border'' of the Auslander-Reiten quiver.
\end{abstract}

\makeatletter
\@namedef{subjclassname@2020}{%
  \textup{2020} Mathematics Subject Classification}
\makeatother

\subjclass[2020]{16G20, 05E10}

\keywords{Dynkin quiver, Auslander-Reiten quiver, stability function, total stability}

\maketitle

\setcounter{tocdepth}{1}
\tableofcontents

%%%%%%%%%%%%%%%%%%%%%%%%%%%%%%%%%%%%%%%%%%%%%%%%%%%%
\section{Introduction}\label{sec:intro}
\subsection{Motivation and context}
This paper concerns stability for quivers in the generality defined by Rudakov \cite{Rudakov97}, following the formalism of stability functions presented in \cite{BST} (see Definition \ref{def:stability}).  These are functions from the collection of isomorphism classes of nonzero representations to a totally ordered set which satisfy the see-saw property \eqref{eq:seesaw}.  This is more general than Bridgeland's stability conditions in \cite{Bridgeland07}, also called linear stability conditions or central charges.
Other notions of stability for quiver representations were also introduced in \cite{Schofield91, King94}.
Stability conditions have found connections across various areas of mathematics such as moduli spaces of representations, semi-invariants, Harder-Narasimhan filtrations, quantum dilogarithm identities, and green sequences and paths.

We are specifically interested in stability functions which make every indecomposable representation of a quiver stable.
Such a stability function is called \emph{totally stable}, and in the case of linear stablity conditions, the collection of these has received a lot of attention in recent years  \cite{Reineke03,Qiu15,Igusa20,HH,AI,KOT21,Qiu-gldim,BGMS,Kinser22,Qiu-contractible,QZ,Marczinzik,CQZ}.
 idering total stability, we can immediately restrict our attention to Dynkin quivers, as every non-Dynkin quiver has indecomposable representations with an indecomposable subrepresentation of the same dimension vector, and thus cannot be stable.
For Dynkin quivers, the structure of the module category is very combinatorial and nicely encoded by the Auslander-Reiten quiver; this is the main tool we use throughout this paper.
We work over an arbitrary base field which does not require its own notation.

\subsection{Results}
An elementary reduction shows that $\mu$ is totally stable if and only if $\mu(W)<\mu(V)$ for all pairs of nonzero \emph{indecomposable} representations $W<V$ (see Lemma \ref{lem:indecomp}).
%By the seesaw property (see Definition \ref{def:stability}), this is equivalent to requiring $\mu(W) < \mu(V/W)$ for all such pairs.
The third column of the table below shows the number of pairs of distinct indecomposables for each Dynkin type, a rough estimate of the number of inequalities which would need to be considered with only this reduction.  

Our main theorem gives a criteria for a stability function to be totally stable that significantly reduces the number of pairs which need to be considered.  
The fourth column of the table below shows the much smaller number of inequalities which need to be considered according to Theorem \ref{thm:main}.

\begin{theorem}\label{thm:main}
Let $Q$ be a Dynkin quiver and $\mu\colon \rep^*(Q)\to \mathcal{P}$ be a stability function on $Q$, where $\mathcal{P}$ is a totally ordered set.  
We have that $\mu$ is totally stable if and only if
$\mu(\tau V) < \mu(V)$
%for all non-projective, indecomposable representations of $Q$ 
for all almost-split sequences $0 \to \tau V \to E \to V \to 0$ such that $E$ is indecomposable (where $\tau$ is the Auslander-Reiten translation).
\end{theorem}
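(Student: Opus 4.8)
The plan for the forward implication is immediate. Suppose $\mu$ is totally stable and let $0 \to \tau V \to E \to V \to 0$ be an almost split sequence with $E$ indecomposable. Since $V$ is a nonzero (non-projective) indecomposable, $\tau V$ is a nonzero indecomposable and the inclusion $\tau V \hookrightarrow E$ is proper (its cokernel is $V \neq 0$). As $E$ is indecomposable, total stability gives $\mu(\tau V) < \mu(E)$; feeding this into the see-saw property \eqref{eq:seesaw} for the sequence rules out the constant and decreasing cases, so $\mu(\tau V) < \mu(E) < \mu(V)$, and in particular $\mu(\tau V) < \mu(V)$.

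For the converse I would argue as follows. By Lemma \ref{lem:indecomp} it suffices to prove $\mu(W) < \mu(V)$ for every proper inclusion $W \hookrightarrow V$ of indecomposables. Because $Q$ is Dynkin, $\rep Q$ is representation-directed, so $\rad^\infty = 0$ and every nonzero nonisomorphism between indecomposables---in particular every such inclusion---is a sum of compositions of irreducible maps; hence there is a path $W = X_0 \to X_1 \to \cdots \to X_n = V$ in the Auslander--Reiten quiver. The whole converse therefore reduces to the single claim $(\star)$ that $\mu$ is strictly increasing along every arrow, i.e. $\mu(X) < \mu(Y)$ for every irreducible map $X \to Y$: given $(\star)$, any inclusion yields $\mu(W) = \mu(X_0) < \cdots < \mu(X_n) = \mu(V)$.

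I would establish $(\star)$ by knitting inward from the border. Recall each irreducible map is a proper mono or a proper epi. For a mesh $0 \to \tau V \to \bigoplus_i E_i \to V \to 0$, the arrows $\tau V \to E_i$ have targets $E_i$ preceding $V$, so by induction $\mu(\tau V) < \mu(E_i)$ for all $i$; since $\mu$ of a direct sum is bounded below by the minimum of its summands (a consequence of \eqref{eq:seesaw}), this yields $\mu(\tau V) < \mu(\bigoplus_i E_i)$ and then, again via see-saw, $\mu(\tau V) < \mu(\bigoplus_i E_i) < \mu(V)$. For a border mesh, where $E = E_1$ is indecomposable, this already delivers the one remaining inequality $\mu(E_1) < \mu(V)$, exactly matching the hypothesis. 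For the arrows $E_i \to V$ in a non-border mesh I would attach to each its defining short exact sequence: for an epi $E_i \twoheadrightarrow V$ with kernel $K$, see-saw reduces $\mu(E_i) < \mu(V)$ to $\mu(K) < \mu(E_i)$, an instance of stability of $E_i$ against the submodule $K$; for a mono $E_i \hookrightarrow V$ with cokernel $C$, it reduces to $\mu(E_i) < \mu(C)$, which I would obtain from a shorter AR-quiver path fed back into the inductive hypothesis.

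The main obstacle is precisely this last step for non-border meshes. The see-saw property constrains only the total middle term $\mu(\bigoplus_i E_i)$, so it pins down $\min_i \mu(E_i)$ but not each individual $\mu(E_i)$; isolating the summands forces one to invoke the kernel and cokernel sequences above. The delicate point is that the mono-arrow reduction (to a cokernel, a quotient of $V$) and the epi-arrow reduction (to a kernel, a submodule of the source) propagate information in opposite directions along the AR quiver, so a naive induction on a single linear order threatens to become circular. Making the induction go through will require the finer combinatorics of Dynkin AR quivers---the mesh pairing of each arrow $\tau V \to E_i$ with $E_i \to V$, the bounded number of middle terms, and the alternation of mono and epi arrows---to certify that each auxiliary kernel or cokernel is genuinely controlled by data already established nearer the border. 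I expect organizing this bookkeeping, rather than any single inequality, to be the crux.
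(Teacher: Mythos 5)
Your forward implication is correct, and your reduction of the converse to the single claim $(\star)$ --- that $\mu$ strictly increases along every arrow of $\AR(Q)$, combined with the fact that a proper inclusion of indecomposables over a representation-directed algebra is a sum of compositions of irreducible maps --- is exactly the framework the paper uses (its Definition of a \emph{positive} arrow is precisely your $(\star)$). The problem is that the entire content of the converse lies in proving $(\star)$, and your proposal does not prove it: you correctly diagnose that a mesh-by-mesh knitting argument threatens circularity, because the epi-arrows in a mesh reduce to an inequality about a kernel lying far to the \emph{left} in the AR quiver while the mono-arrows reduce to a cokernel lying to the \emph{right}, and then you stop, stating that you ``expect organizing this bookkeeping \dots to be the crux.'' That crux is the theorem. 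As written, your induction has no well-founded order: for a non-border arrow $E_i \twoheadrightarrow V$, the inequality $\mu(K) < \mu(E_i)$ for the kernel $K$ is not an instance of any previously established arrow, since $K$ and $E_i$ are not adjacent in $\AR(Q)$.

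The paper resolves this with a specific global traversal, carried out case-by-case in types $\mathbb{A}$, $\mathbb{D}$, $\mathbb{E}$, and it needs several structural inputs you do not supply. First, Lemma \ref{lem:ladder} identifies the kernel of the top rung of a ``ladder'' as the vertex at the ladder's base, which converts the inequality $\mu(K)<\mu(E_i)$ into a chain of inequalities along arrows in strictly lower rows; this is what makes an induction upward on rows of the ``pyramid'' region (between $P_n$ and $I_n$) well-founded, with the border row as base case. Second, the two rows through the triple mesh contain two arrows that belong to no border sequence; these are handled by explicit short exact sequences such as $0 \to P_3 \to P_1 \to I_1 \to 0$ together with a positive path from $P_1$ to $I_1$, an argument that depends on the orientation of $Q$ and has no counterpart in your sketch. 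Third, the arrows outside the pyramid require identifying their cokernels across the triple mesh (Lemma \ref{lem:triplemeshcoker} and its type-$\mathbb{E}$ analogue, proved by a dimension-vector computation), after which their positivity follows from positive paths already established inside the pyramid, in a further two-stage induction (perpendicular arrows, then parallel ones). Without these identifications and the explicit order in which the regions are processed, the converse is not established.
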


\begin{center}
{\renewcommand{\arraystretch}{1.25}
\begin{tabularx}{0.9\textwidth} { 
  | >{\centering\arraybackslash}X  
  | >{\centering\arraybackslash}X 
  | >{\centering\arraybackslash}X  
  | >{\centering\arraybackslash}X | }
 \hline
 Type & Number of Indecomposables & Number of Stability Inequalities & Number of Inequalities using Theorem \\
 \hline\hline
 $\mathbb{A}_n$ & $\frac{n(n+1)}{2}$ & $o(n^4)$ & $n-1$ \\
 \hline
 $\mathbb{D}_n$ & $n(n-1)$ & $o(n^4)$ & $3(n-2)$ \\
 \hline
 $\mathbb{E}_6$ & 36 & 1260 & 15 \\
 \hline
 $\mathbb{E}_7$ & 63 & 3906 & 24 \\
 \hline
 $\mathbb{E}_8$ & 120 & 14280 & 42 \\ 
 \hline
\end{tabularx}}
\end{center}

The proof uses combinatorics of Auslander-Reiten quivers (reviewed in \S\ref{sec:AR}), proceeding in cases by Dynkin type.  Type $\mathbb{A}$ was already essentially completed in \cite{Kinser22} but in different language, so we include this case for completeness.
Of course, the combinatorics of Auslander-Reiten quivers is more complicated in types $\mathbb{D}$ and $\mathbb{E}$, but fortunately the general strategy of our type $\mathbb{D}$ proof can be intuitively extended to type $\mathbb{E}$.

\begin{remark}
This project began with a computational investigation of \cite[Conjecture~7.1]{Reineke03}, which considers standard linear stability functions, that is, those of the form $\mu(V) = (\theta\cdot \ddim V)/\dim V$, where $\theta \in \ZZ^{Q_0}$ is an integral weight, $\cdot$ is the usual dot product, and $\dim V$ is the total vector space dimension of $V$.  We found that the conjecture is:
\begin{itemize}
    \item true for all orientations in type $\mathbb{A}_n$ for all $n$ (already proven in citations above);
    \item true in type $\mathbb{D}_n$ for $n \leq 8$; 
    \item false for certain orientations in type $\mathbb{D}_n$ for all $n \geq 9$;
    \item true for all orientations in type $\mathbb{E}_6$;
    \item false for certain orientations in types $\mathbb{E}_7$ and $\mathbb{E}_8$.
\end{itemize}
Independently, Marczinzik recently published an easily verifiable counterexample \cite{Marczinzik}.  Furthermore, in references cited above, a modification of the conjecture using more general linear stability conditions is proposed. The modified version replaces $\dim V$ in the denominator with $w\cdot \dim V$, where $w \in \mathbb{Z}_{>0}$ is a positive weight, and was proven in \cite{CQZ}.
\end{remark}

\subsection*{Acknowledgements}
We thank the QPA team for development of the software \cite{QPA} which was used for many explorations that led to discovering the results of this paper.

%%%%%%%%%%%%%%%%%%%%%%%%%%%%%%%%%%%%%%%%%%%%%%%%%%%%
\section{Background and notation}\label{sec:background}
\subsection{Quiver representations}
We recall just enough here to establish our notation; an introduction to quiver representations can be found in texts such as \cite{assemetal,Schiffler:2014aa,DWbook}.
We write $Q_0$ for the set of vertices of a quiver $Q$, and $Q_1$ for its set of arrows, while $sa$ and $ta$ denote the \emph{source} and \emph{target} of an arrow $sa \xrightarrow{a} ta$.
The category of finite dimensional representations of $Q$ is written $\rep(Q)$, and the collection of nonzero objects of $\rep(Q)$ is denoted by $\rep^*(Q)$.
The \emph{dimension vector} of a representation $V$ of $Q$ is denoted by $\ddim V \in \RR^{Q_0}$.
We denote by $P_x$ the indecomposable projective representation associated to a vertex $x$.
% (i.e. whose top is the simple representation supported at $x$ with zero maps on the arrows).
Similarly, $I_x$ denotes the indecomposable injective representation associated to vertex $x$.

\subsection{Stability}\label{sec:stability}
We use the definition of general algebraic stability introduced by Rudakov \cite{Rudakov97}, following the exposition of \cite{BST}.

\begin{definition}\label{def:stability}
Let $(\mathcal{P}, \leq)$ be a totally ordered set.  A function $\mu\colon \rep^*(Q) \to \mathcal{P}$ which is constant on isomorphism classes is a \emph{stability function for $Q$} if, for each short exact sequence of nonzero objects
$0\to A \to B \to C \to 0$ in $\rep(Q)$, exactly one of the following holds:

\begin{equation}\label{eq:seesaw}
    \begin{split}
    \text{either}\quad & \mu(A) < \mu(B) < \mu(C),\\
    \text{or}\quad & \mu(A) > \mu(B) > \mu(C),\\
    \text{or}\quad & \mu(A) = \mu(B) = \mu(C).
    \end{split}
\end{equation}
The condition immediately above is called the \emph{see-saw property} for $\mu$ and $\mu(X)$ is the \emph{phase} or \emph{slope} of $X$.
\end{definition}

A nonzero representation $V$ of $Q$ is called $\mu$-\emph{stable} if $\mu(W) < \mu(V)$ for all nonzero, proper subrepresentations $0< W<V$.
This work investigates stability conditions $\mu$ such that every indecomposable representation of $Q$ is $\mu$-stable.  This immediately restricts our attention to $Q$ of Dynkin type since the endomorphism ring of a stable representation must be a division ring.

\begin{definition}\label{def:weights}
A stability condition $\mu$ for a quiver $Q$ is \emph{totally stable}, or a \emph{total stability condition}, if every indecomposable representation of $Q$ is $\mu$-stable.
\end{definition}

The following elementary lemma gives a useful first reduction for checking a stability condition is totally stable.

\begin{lemma}\label{lem:indecomp}
Let $Q$ be an arbitrary quiver and $\mu$ a stability function for $Q$.  A representation $V$ of $Q$ is $\mu$-stable if and only if $\mu(W) < \mu(V)$ for all proper nonzero indecomposable subrepresentations $W < V$.
\end{lemma}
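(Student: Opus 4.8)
The plan is to prove the nontrivial ``if'' direction, since the ``only if'' direction is immediate: if $V$ is $\mu$-stable then by definition $\mu(W)<\mu(V)$ for \emph{all} proper nonzero subrepresentations $W<V$, in particular for the indecomposable ones. So I will assume that $\mu(W)<\mu(V)$ holds for every proper nonzero \emph{indecomposable} subrepresentation $W<V$, fix an arbitrary proper nonzero subrepresentation $U<V$, and aim to show $\mu(U)<\mu(V)$.

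The first step is to record how $\mu$ interacts with direct sums. Applying the see-saw property \eqref{eq:seesaw} to the split short exact sequence $0\to A\to A\oplus B\to B\to 0$ shows that in each of the three possible cases the value $\mu(A\oplus B)$ lies between $\mu(A)$ and $\mu(B)$; in particular $\mu(A\oplus B)\leq \max(\mu(A),\mu(B))$. Iterating this (formally, by induction on the number of summands) yields $\mu\!\left(\bigoplus_{i=1}^{k} W_i\right)\leq \max_i \mu(W_i)$ for any finite direct sum.

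The second step applies this to a Krull--Schmidt decomposition. Since $U$ is finite dimensional, I may write $U=\bigoplus_{i=1}^{k} W_i$ with each $W_i$ indecomposable. Each summand $W_i$ is nonzero, and the composite $W_i\injects U\injects V$ realizes $W_i$ as a subrepresentation of $V$ that is proper, because $W_i\subseteq U\subsetneq V$. Hence each $W_i$ is a proper nonzero indecomposable subrepresentation of $V$, so the hypothesis gives $\mu(W_i)<\mu(V)$ for every $i$. Combining with the bound from the first step,
\[
\mu(U)\leq \max_i \mu(W_i)<\mu(V),
\]
where the final strict inequality holds since the maximum is taken over a finite set each of whose members is strictly below $\mu(V)$. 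Thus every proper nonzero subrepresentation of $V$ has strictly smaller slope, i.e. $V$ is $\mu$-stable.

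I do not expect a genuine obstacle here; the argument is elementary. The only points requiring care are the direct-sum slope bound, which is precisely where the see-saw property \eqref{eq:seesaw} is used, and the verification that each indecomposable summand of $U$ is again a \emph{proper} nonzero subrepresentation of $V$. Everything else is bookkeeping over the Krull--Schmidt decomposition.
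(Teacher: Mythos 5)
Your proof is correct, and it is essentially the standard argument: the paper itself does not spell out a proof but defers to \cite[\S2.3]{Kinser22}, where the same reduction is used --- decompose an arbitrary proper nonzero subrepresentation into indecomposable summands and apply the see-saw property to split exact sequences to get $\mu\bigl(\bigoplus_i W_i\bigr)\leq \max_i \mu(W_i)$. The two points you flag (the direct-sum slope bound and the properness of each summand inside $V$) are exactly the ones that need checking, and you handle both correctly.
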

\begin{proof}
The forward implication is by definition, so let us assume for the converse that $\mu(W) < \mu(V)$ for all proper nonzero indecomposable $W<V$.
We first note that $\mu(A\oplus B)$ lies between $\mu(A)$ and $\mu(B)$ for any nonzero $A, B$, by considering the see-saw property for a split short exact sequence.

With this, let $Y < V$ be an arbitrary proper nonzero subrepresentation.  From repeated application of the observation above, $Y$ has an indecomposable summand $X$ such that $\mu(X) \geq \mu(Y)$.  Since $\mu(X) < \mu(V)$ by assumption, this shows $\mu(Y)<\mu(V)$ as well.
\end{proof}

\subsection{Auslander-Reiten quivers}\label{sec:AR}
Given a quiver $Q$, the Auslander-Reiten quiver of $Q$ is a useful tool for visualizing the structure of the category $\rep(Q)$.  When $Q$ is Dynkin, it encodes essentially all of the information about the category $\rep(Q)$ in combinatorics.
The reader may wish to consult standard references such as \cite{assemetal, Schiffler:2014aa, DWbook} for more detail on Ausland-Reiten theory.

The \emph{Auslander-Reiten quiver} of a quiver $Q$, denoted $\AR(Q)$, is
the quiver whose vertices correspond to the isomorphism classes of indecomposable representations of $Q$,
and arrows correspond to irreducible morphisms,
with $\dim{\rad(V,W)/\rad^2(V,W)}$ arrows from $V$ to $W$. 
We often use that the duality functor on representations induces an identification between $\text{AR}(Q^{\text{op}})$ and $\text{AR}(Q)^{\text{op}}$.

Let $\tau$ be the Auslander-Reiten translation and $\tau^{-1}$ the inverse Auslander-Reiten translation. The \emph{$\tau$-orbit of $V\in \rep(Q)$} is the set of nonzero translates $\tau^k V$, for $k \in \ZZ$. Standard facts are that $\tau$ maps projectives to 0 and other indecomposables to indecomposables, while $\tau^{-1}$ maps injectives to 0 and other indecomposables to indecomposables.
%We say two vertices of the Auslander-Reiten quiver are in the \emph{same row} of $\text{AR}(Q)$ if they are within the same $\tau$-orbit.

% DW Thm. 5.4.8 (p. 90)
Exact sequences of the form
\[
0 \rightarrow L \xrightarrow[]{f} M \xrightarrow[]{g} N \rightarrow 0
\] 
with $L, N$ indecomposable and $f,g$ irreducible are almost split and the converse is also true \cite[Theorem 5.4.8]{DWbook} for example.
Additionally, when $N$ is indecomposable and not projective, almost split sequences are of the form 
\begin{equation}
0\rightarrow \tau N\rightarrow M\rightarrow N\rightarrow 0,
\end{equation} 
and conversely, every sequence of the above form is almost split.
It follows that the Auslander-Reiten quiver summarizes information about the indecomposable modules, irreducible morphisms, and almost split sequences in the category $\rep(Q)$.

Irreducible morphisms are either monomorphisms or epimorphisms \cite[Lemma 5.2.2]{DWbook}; furthermore, if $f\colon L\rightarrow M$ is an irreducible
monomorphism, then $\coker f$ is
indecomposable \cite[Corollary 5.2.7]{DWbook}.
Similarly, if $g\colon M \rightarrow N$ is an
irreducible epimorphism, then $\ker g$ is indecomposable. Thus for each arrow of $\text{AR}(Q)$, the kernel or cokernel (whichever is nonzero) uniquely determines a third vertex of $\text{AR}(Q)$ and corresponding short exact sequence of indecomposables. This observation is essential throughout this paper.
We also note that for Dynkin type quivers, indecomposable representations are uniquely determined by their dimension vector.
%Theorem 5.4.10 of DW proves the existence of almost split sequences for all indecomposable finite dimensional non-injective, non-projective modules.\\

For Dynkin quivers, almost split sequences are represented in $\text{AR}(Q)$ by three different kinds of meshes \cite[\S 1.5]{Schiffler:2014aa}
% https://q.uiver.app/?q=WzAsMTIsWzAsMSwiXFx0YXUgViJdLFsxLDAsIlUiXSxbMiwxLCJWIl0sWzMsMSwiXFx0YXUgViJdLFs0LDAsIlVfMSJdLFs0LDIsIlVfMiJdLFs1LDEsIlYiXSxbNiwxLCJcXHRhdSBWIl0sWzcsMCwiVV8xIl0sWzcsMSwiVV8yIl0sWzcsMiwiVV8zIl0sWzgsMSwiViJdLFswLDFdLFsxLDJdLFszLDRdLFszLDVdLFs1LDZdLFs0LDZdLFs3LDhdLFs3LDldLFs3LDEwXSxbMTAsMTFdLFs5LDExXSxbOCwxMV1d
\[\begin{tikzcd}
	& {U_1} &&& {U_1} &&& {U_1} \\
	{\tau V} && V & {\tau V} && V & {\tau V} & {U_2} & V \\
	&&&& {U_2} &&& {U_3}
	\arrow[from=2-1, to=1-2]
	\arrow[from=1-2, to=2-3]
	\arrow[from=2-4, to=1-5]
	\arrow[from=2-4, to=3-5]
	\arrow[from=3-5, to=2-6]
	\arrow[from=1-5, to=2-6]
	\arrow[from=2-7, to=1-8]
	\arrow[from=2-7, to=2-8]
	\arrow[from=2-7, to=3-8]
	\arrow[from=3-8, to=2-9]
	\arrow[from=2-8, to=2-9]
	\arrow[from=1-8, to=2-9]
\end{tikzcd}\]
with the corresponding almost split sequences having the form 
\[
0\rightarrow \tau V\rightarrow \bigoplus_{i} U_i\rightarrow V\rightarrow 0.
\]

The first two types of meshes are found in all Dynkin types; however, only
Dynkin quivers of type $\mathbb{D}$ and $\mathbb{E}$ have meshes of the third type, which will be referred
to as \emph{triple meshes}. Auslander-Reiten quivers for type $\mathbb{D}$ and $\mathbb{E}$ quivers will always have just one triple mesh, up to Auslander-Reiten translates, due to the unique branch point found in both quiver types. Within the middle of the triple mesh, there are two $\tau$-orbits: $U_2$ and $V$ belong to different $\tau$-orbits in the triple mesh above. 

Recall that a subquiver $Q'$ of a quiver $Q$ is said to be \emph{convex} in $Q$ if, for any path $x_0\rightarrow x_1\rightarrow\ldots\rightarrow x_l$ in $Q$ with $x_0, x_l\in Q'_0$, we have $x_i\in Q'_0$ for all $0\leq i\leq l$.
A convex region of an Auslander-Reiten quiver of the following form will be called a \emph{ladder} in $\text{AR}(Q)$ and the arrow labeled $a$ the \emph{top rung} of the ladder.
\begin{equation}\label{eq:ladder}
\begin{tikzcd}[column sep={4em,between origins},row sep={4em,between origins}]
	&&&&&& {X_{k}} & {} \\
	&&&&& {X_{k-1}} && {Y_{k}} \\
	&&& {} & {X_{k-2}} && {Y_{k-1}} \\
	&&& \iddots && {Y_{k-2}} \\
	{} && {X_0} && \iddots \\
	& {Z} && {Y_0}
	\arrow["a", from=1-7, to=2-8]
	\arrow[from=2-6, to=3-7]
	\arrow[from=2-6, to=1-7]
	\arrow[from=3-7, to=2-8]
	\arrow[from=3-5, to=2-6]
	\arrow[from=3-5, to=4-6]
	\arrow[from=4-6, to=3-7]
	\arrow[from=6-2, to=5-3]
	\arrow[from=5-3, to=6-4]
	\arrow[from=5-3, to=4-4]
	\arrow[from=6-4, to=5-5]
	\arrow[from=4-4, to=3-5]
	\arrow[from=5-5, to=4-6]
\end{tikzcd}
\end{equation}
A region of $\AR(Q)$ which corresponds to a ladder in $\AR(Q^{\textrm{op}})$ will be called a \emph{dual ladder}.
The following lemma can be proven by a straightforward induction on $k$.

\begin{lemma}\label{lem:ladder}
For a ladder in $\text{AR}(Q)$ labeled as in \eqref{eq:ladder},
we have $Z\simeq \ker a$.
\end{lemma}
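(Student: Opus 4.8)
The plan is to pin down the dimension vector of $Z$ by reading the almost split sequences directly off the ladder, use this to show that the top rung $a$ must be an epimorphism, and then identify $Z$ with $\ker a$ via the fact that indecomposables over a Dynkin quiver are determined by their dimension vectors.

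First I would record the meshes contained in the ladder. Writing $r_i\colon X_i\to Y_i$ for the $i$-th rung (so that $r_k=a$), the bottom of the ladder is a boundary mesh
\[
0\to Z\to X_0\xrightarrow{r_0} Y_0\to 0,
\]
while for $1\le i\le k$ the ladder contains the interior mesh
\[
0\to X_{i-1}\to X_i\oplus Y_{i-1}\to Y_i\to 0 .
\]
The essential point is that convexity of the ladder guarantees these meshes are \emph{complete}, i.e.\ that the listed middle terms are all of them. Indeed, any middle term $W$ of the almost split sequence ending at $Y_i$ lies on a path $\tau Y_i\to W\to Y_i$ whose endpoints belong to the ladder; since the ladder is convex, $W$ belongs to it too, and inspection of \eqref{eq:ladder} shows the only such vertices are those displayed. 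In particular $\tau Y_0=Z$ has a single arrow out of it inside the ladder, so the bottom mesh really is a boundary mesh with sole middle term $X_0$.

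Next I would telescope. Since almost split sequences are exact, the interior meshes give $\ddim X_i-\ddim Y_i=\ddim X_{i-1}-\ddim Y_{i-1}$ for each $i$, whence
\[
\ddim X_k-\ddim Y_k=\ddim X_0-\ddim Y_0=\ddim Z ,
\]
the last equality coming from the bottom mesh. Because $Z$ is a genuine nonzero representation, $\ddim Z$ is nonzero with no negative entries, so $\ddim X_k-\ddim Y_k$ is nonzero and not $\le 0$. Now $a$ is an irreducible morphism, hence either a monomorphism or an epimorphism; were it mono, its cokernel would be a representation of dimension vector $\ddim Y_k-\ddim X_k=-\ddim Z$, which is impossible. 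Therefore $a$ is an epimorphism, so $\ker a$ is indecomposable and satisfies $\ddim\ker a=\ddim X_k-\ddim Y_k=\ddim Z$.

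Finally, since $Q$ is Dynkin, an indecomposable is determined up to isomorphism by its dimension vector, so $Z\simeq\ker a$. I expect the only genuine subtlety to be the bookkeeping in the first step: one must invoke convexity to be certain that no middle terms of the relevant meshes have been dropped, and in particular that the bottom mesh is a boundary mesh rather than an interior one. Once the meshes are pinned down, the dimension count and the mono/epi dichotomy finish the argument with no further input. Equivalently, the same content can be organized as an induction on the height $k$, peeling off the top rung and checking $\ker r_k\simeq\ker r_{k-1}$ at each stage; this merely repackages the telescoping above.
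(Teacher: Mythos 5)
Your proof is correct, but it takes a genuinely different route from the paper's. The paper's own argument is a diagram chase plus induction: for $k=1$ one chases the two meshes $0 \to Z \to X_0 \to Y_0 \to 0$ and $0 \to X_0 \to X_1 \oplus Y_0 \to Y_1 \to 0$ (essentially the snake lemma) to get $\ker(X_1 \to Y_1) \simeq \ker(X_0 \to Y_0) = Z$, and then iterates up the ladder; this identifies the kernel functorially and does not use that $Q$ is Dynkin. You instead telescope dimension vectors across the meshes to obtain $\ddim X_k - \ddim Y_k = \ddim Z$, use positivity of $\ddim Z$ together with the mono/epi dichotomy for irreducible morphisms to force $a$ to be an epimorphism with indecomposable kernel, and then invoke the Dynkin-specific fact that an indecomposable is determined by its dimension vector. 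This is precisely the technique the paper deploys later in Lemma \ref{lem:triplemeshcoker} to identify $\coker b_4$, so it is entirely in the spirit of the paper, though it consumes the Dynkin hypothesis where the diagram chase does not (harmless here, since the lemma is only ever applied to Dynkin quivers). Your explicit use of convexity to certify that the displayed middle terms exhaust each mesh, and that the bottom mesh is a boundary mesh, addresses a point the paper leaves implicit and is worth recording. Both arguments are complete.
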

%\begin{proof}
%The case of $k=1$ follows from diagram chasing with short exact sequences, and then for general $k$ by induction.
%\end{proof}

\subsection{Stability and Auslander-Reiten quivers}
To move our examination of stability from the representation category to the Auslander-Reiten quiver, we are led to introduce some additional terminology and reductions.

\begin{definition}\label{def:positive}
Let $\mu$ be a stability function on a quiver $Q$.  
We say that \emph{$\mu$ is positive} on a nonzero map $f\colon A \to B$ in $\rep(Q)$ if $\mu(A)<\mu(B)$.  We extend this terminology by saying $\mu$ is positive on a short exact sequence with three nonzero terms when $\mu$ is positive on one of its nonzero maps (and thus, on both by the see-saw property).

We further say that an arrow in AR($Q$) is \emph{positive} if the corresponding irreducible morphism is positive.
\end{definition}

When $\mu$ is fixed, as it typically will be throughout the paper, we may simply say that a map, arrow, or short exact sequence itself is positive to mean $\mu$ is positive on it.
With this in mind, we can reframe the notion of total stability in terms of the Auslander-Reiten quiver.  We keep the following lemma in mind for the remainder of the paper.

\begin{lemma}\label{lem:stabilityarrows}
Let $\mu$ be a stability function on a Dynkin quiver $Q$.  Then $\mu$ is totally stable if and only if $\mu$ is positive on each arrow of $\AR(Q)$.
\end{lemma}
\begin{proof}
Suppose that $\mu$ is totally stable and $f\colon A \to B$ is an arrow of $\AR(Q)$.  Then $f$ is either injective or surjective, thus positivity follows from $B$ being $\mu$-stable in the first case and from $A$ being $\mu$-stable in the second case (and the see-saw property).

Now suppose that $\mu$ is positive on each arrow of $\AR(Q)$ and let $V\in \rep(Q)$ be indecomposable.  By Lemma \ref{lem:indecomp}, it is enough to consider $0<W<V$ with $W$ also indecomposable.  Since there exists a nonzero morphism from $W$ to $V$, there exists a path
\begin{equation}
W=X_0 \to X_1 \to X_2 \to \cdots \to X_l=V
\end{equation}
in $\AR(Q)$ (although this path itself may not represent the inclusion $W< V$).
This implies that $\mu(X_i) < \mu(X_{i+1})$ for all $i$, so $\mu(W) < \mu (V)$, completing the proof.
\end{proof}

\begin{definition}\label{def:borderseq}
An almost split sequence $0 \to \tau V \to E \to V \to 0$ with $E$ indecomposable is called a \emph{border sequence}.
\end{definition}

This terminology is meant to be intuitive with respect to the standard way of drawing Auslander-Reiten quivers on the page in Dynkin type (see, for example, \cite[Ch.~3]{Schiffler:2014aa}).  With this new terminology and Lemma \ref{lem:stabilityarrows}, we can restate Theorem \ref{thm:main} as saying that a stability function on a Dynkin quiver is totally stable if and only if it is positive on all border sequences.

%%%%%%%%%%%%%%%%%%%%%%%%%%%%%%%%%%%%%%%%%%%%%%%%%%%%
\section{Proof of main theorem in type \texorpdfstring{$\mathbb{D}$}{D}}\label{sec:typeD}

We begin in type $\mathbb{D}$ because it contains all of the essential techniques we need; then proofs in types $\mathbb{A}$ and $\mathbb{E}$  will follow easy but careful modifications.  
Let $Q$ be a type $\mathbb{D}$ quiver and $\mu\colon \rep^*(Q)\to \RR$ a stability function on $Q$ which is positive on all border sequences. 
We label the vertices of a type $\mathbb{D}$ quiver $1, 2, \dotsc, n$ so that 1 and 2 are at the ends of the short branches, 3 is the branch point, and $4, \dotsc, n$ count outward from the branch point (see the top of Figure \ref{fig:typeDAR} for an example).
Then an example of an Auslander-Reiten quiver of type $\mathbb{D}_8$ is shown at the bottom of Figure \ref{fig:typeDAR}.

%[row sep=tiny,column sep=tiny, column sep={2.5em,between origins},row sep={2em,between origins}]
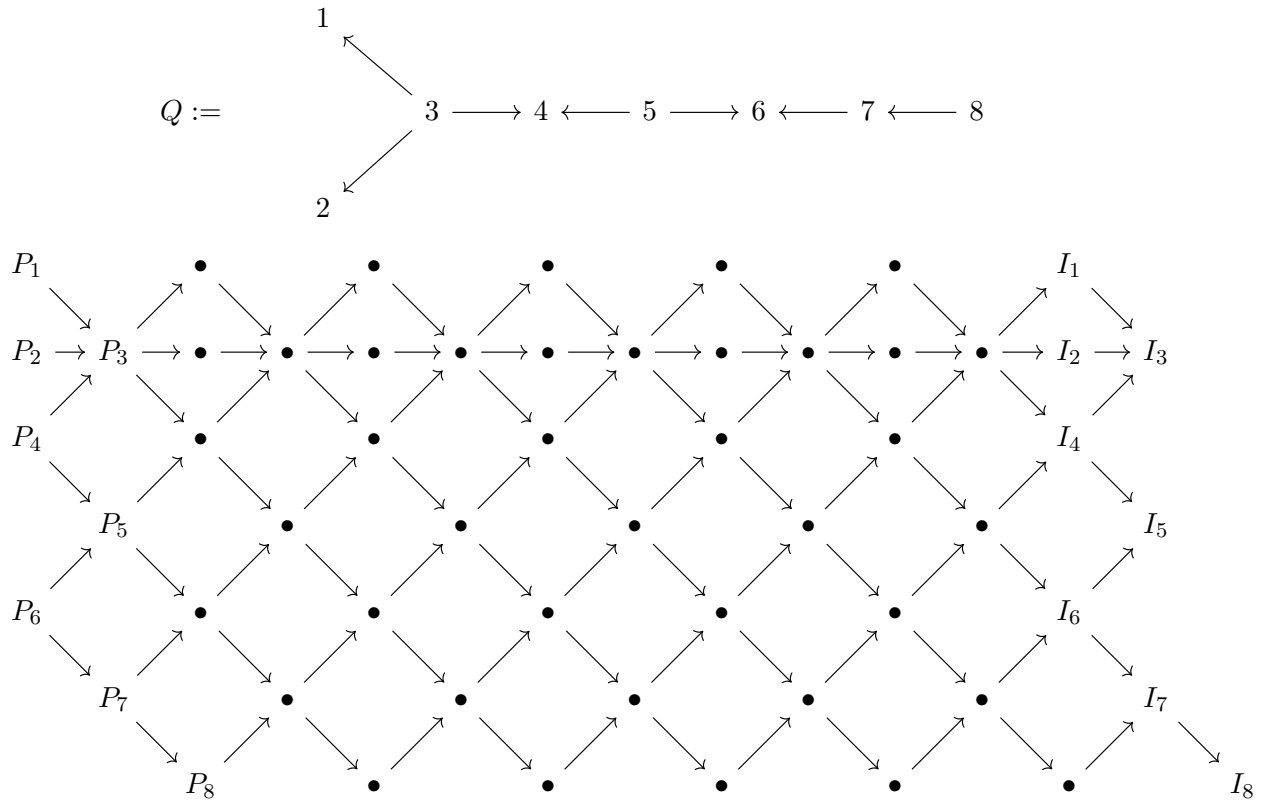
\begin{figure}[H]
% https://q.uiver.app/?q=WzAsMTAsWzEsMCwiMSJdLFsxLDFdLFsxLDIsIjIiXSxbMiwxLCIzIl0sWzMsMSwiNCJdLFs0LDEsIjUiXSxbNSwxLCI2Il0sWzYsMSwiNyJdLFs3LDEsIjgiXSxbMCwxLCJROj0iXSxbMywwXSxbMywyXSxbNSw0XSxbNSw2XSxbNyw2XSxbOCw3XSxbMyw0XV0=
\[\begin{tikzcd}
	& 1 \\
	{Q:=} & {} & 3 & 4 & 5 & 6 & 7 & 8 \\
	& 2
	\arrow[from=2-3, to=1-2]
	\arrow[from=2-3, to=3-2]
	\arrow[from=2-5, to=2-4]
	\arrow[from=2-5, to=2-6]
	\arrow[from=2-7, to=2-6]
	\arrow[from=2-8, to=2-7]
	\arrow[from=2-3, to=2-4]
\end{tikzcd}\]
\[\begin{tikzcd}[column sep={3em,between origins},row sep={3em,between origins}]
	{P_1} & \textcolor{white}{\bullet} & \bullet && \bullet && \bullet && \bullet && \bullet && {I_1} &&&& \textcolor{white}{\bullet} & \textcolor{white}{\bullet} \\
	{P_2} & {P_3} & \bullet & \bullet & \bullet & \bullet & \bullet & \bullet & \bullet & \bullet & \bullet & \bullet & {I_2} & {I_3} &&& \textcolor{white}{\bullet} & \textcolor{white}{\bullet} \\
	{P_4} & \textcolor{white}{\bullet} & \bullet && \bullet && \bullet && \bullet && \bullet && {I_4} &&&& \textcolor{white}{\bullet} & \textcolor{white}{\bullet} \\
	{} & {P_5} & \textcolor{white}{\bullet} & \bullet && \bullet && \bullet && \bullet && \bullet & \textcolor{white}{\bullet} & {I_5} &&& \textcolor{white}{\bullet} & \textcolor{white}{\bullet} \\
	{P_6} & \textcolor{white}{\bullet} & \bullet && \bullet && \bullet && \bullet && \bullet && {I_6} &&&& \textcolor{white}{\bullet} & \textcolor{white}{\bullet} \\
	& {P_7} & \textcolor{white}{\bullet} & \bullet && \bullet && \bullet && \bullet && \bullet & \textcolor{white}{\bullet} & {I_7} &&& \textcolor{white}{\bullet} & \textcolor{white}{\bullet} \\
	& \textcolor{white}{\bullet} & {P_8} && \bullet && \bullet && \bullet && \bullet && \bullet && {I_8} && \textcolor{white}{\bullet}
	\arrow[from=1-3, to=2-4]
	\arrow[from=2-3, to=2-4]
	\arrow[from=3-3, to=2-4]
	\arrow[from=3-3, to=4-4]
	\arrow[from=5-3, to=4-4]
	\arrow[from=5-3, to=6-4]
	\arrow[from=7-3, to=6-4]
	\arrow[from=6-4, to=5-5]
	\arrow[from=5-5, to=4-6]
	\arrow[from=4-6, to=3-7]
	\arrow[from=3-7, to=2-8]
	\arrow[from=4-4, to=3-5]
	\arrow[from=3-5, to=4-6]
	\arrow[from=4-4, to=5-5]
	\arrow[from=2-4, to=3-5]
	\arrow[from=2-4, to=2-5]
	\arrow[from=2-4, to=1-5]
	\arrow[from=1-5, to=2-6]
	\arrow[from=2-5, to=2-6]
	\arrow[from=3-5, to=2-6]
	\arrow[from=2-6, to=3-7]
	\arrow[from=2-6, to=2-7]
	\arrow[from=2-6, to=1-7]
	\arrow[from=1-7, to=2-8]
	\arrow[from=2-7, to=2-8]
	\arrow[from=2-8, to=2-9]
	\arrow[from=2-8, to=1-9]
	\arrow[from=1-11, to=2-12]
	\arrow[from=2-11, to=2-12]
	\arrow[from=6-4, to=7-5]
	\arrow[from=2-2, to=1-3]
	\arrow[from=2-2, to=2-3]
	\arrow[from=2-2, to=3-3]
	\arrow[from=4-2, to=3-3]
	\arrow[from=6-2, to=5-3]
	\arrow[from=4-2, to=5-3]
	\arrow[from=6-2, to=7-3]
	\arrow[from=1-1, to=2-2]
	\arrow[from=2-1, to=2-2]
	\arrow[from=3-1, to=2-2]
	\arrow[from=3-1, to=4-2]
	\arrow[from=5-1, to=4-2]
	\arrow[from=5-1, to=6-2]
	\arrow[from=2-12, to=1-13]
	\arrow[from=2-12, to=2-13]
	\arrow[from=2-12, to=3-13]
	\arrow[from=3-13, to=2-14]
	\arrow[from=2-13, to=2-14]
	\arrow[from=1-13, to=2-14]
	\arrow[from=3-13, to=4-14]
	\arrow[from=7-5, to=6-6]
	\arrow[from=6-6, to=5-7]
	\arrow[from=5-7, to=4-8]
	\arrow[from=4-8, to=3-9]
	\arrow[from=3-9, to=2-10]
	\arrow[from=2-8, to=3-9]
	\arrow[from=2-9, to=2-10]
	\arrow[from=1-9, to=2-10]
	\arrow[from=3-7, to=4-8]
	\arrow[from=4-6, to=5-7]
	\arrow[from=5-5, to=6-6]
	\arrow[from=2-10, to=2-11]
	\arrow[from=2-10, to=1-11]
	\arrow[from=6-6, to=7-7]
	\arrow[from=5-7, to=6-8]
	\arrow[from=4-8, to=5-9]
	\arrow[from=3-9, to=4-10]
	\arrow[from=2-10, to=3-11]
	\arrow[from=3-11, to=2-12]
	\arrow[from=6-8, to=7-9]
	\arrow[from=7-7, to=6-8]
	\arrow[from=6-8, to=5-9]
	\arrow[from=5-9, to=6-10]
	\arrow[from=7-9, to=6-10]
	\arrow[from=5-9, to=4-10]
	\arrow[from=4-10, to=3-11]
	\arrow[from=4-12, to=3-13]
	\arrow[from=5-11, to=4-12]
	\arrow[from=4-10, to=5-11]
	\arrow[from=6-10, to=5-11]
	\arrow[from=6-10, to=7-11]
	\arrow[from=5-11, to=6-12]
	\arrow[from=7-11, to=6-12]
	\arrow[from=6-12, to=5-13]
	\arrow[from=4-12, to=5-13]
	\arrow[from=3-11, to=4-12]
	\arrow[from=5-13, to=4-14]
	\arrow[from=6-12, to=7-13]
	\arrow[from=7-13, to=6-14]
	\arrow[from=5-13, to=6-14]
	\arrow[from=6-14, to=7-15]
\end{tikzcd}\]
\caption{A quiver of type $\mathbb{D}_8$ and its Auslander-Reiten quiver}
\label{fig:typeDAR}
\end{figure}

To prove that $\mu$ is totally stable, we go through various regions of $\AR(Q)$ in a carefully chosen order to show that our assumption implies each arrow is positive.  The key region is in the following definition.

\begin{definition}
A vertex $x$ of AR($Q$) is in the \emph{pyramid region} if there exist paths in AR($Q$) from $P_n$ to $x$ and from $x$ to $I_n$.
An arrow is in the pyramid region if both its source and target are in the pyramid.
A vertex $x$ is in the \emph{left wall of the pyramid} if there is a unique path from $P_n$ to $x$ in $\AR(Q)$.
An arrow $a$ is in the \emph{left wall of the pyramid} if its source and target are.
We define vertices and arrows in the \emph{right wall of the pyramid} similarly.
\end{definition}

The pyramid region for our running example is illustrated with blue, squiggly arrows and starred vertices in Figure \ref{fig:typeDpyramid2}. Vertices in the pyramid region (besides $P_8$ and $I_8$) are labeled with $\star$ here, and arrows in the pyramid region are blue and squiggly. 
Notice that it always contains exactly one triple mesh at the top, using that $\dim\Hom_Q(P_n, I_n)=1$ and standard computations in Auslander-Reiten quivers \cite[\S3.3.4.1]{Schiffler:2014aa}.
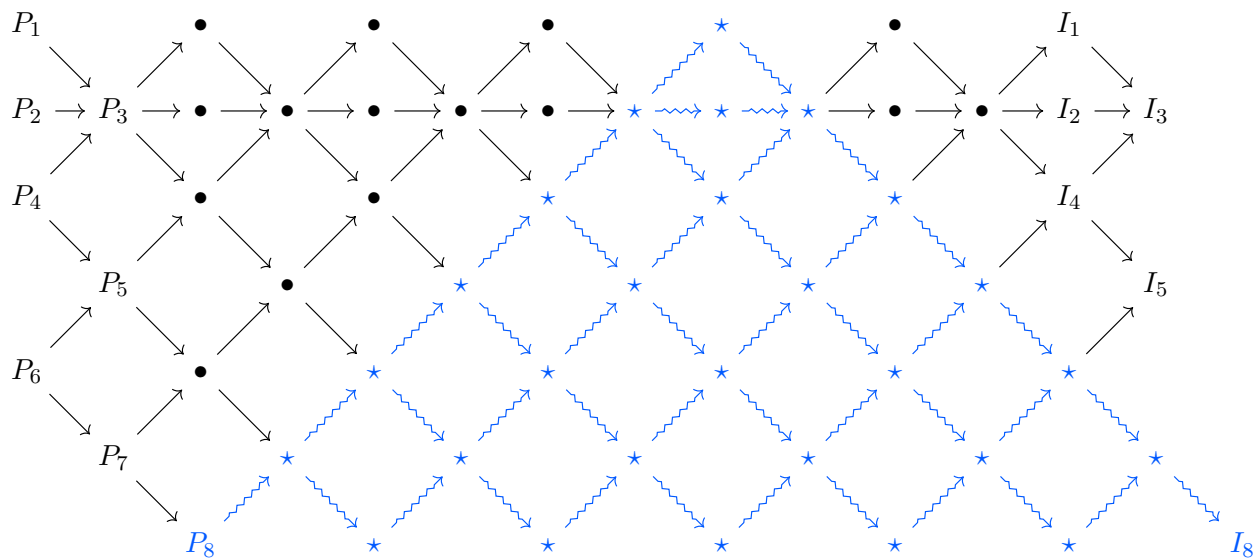
\begin{figure}[h]
\[\begin{tikzcd}[column sep={3em,between origins},row sep={3em,between origins}]
	{P_1} & \textcolor{white}{\bullet} & \bullet && \bullet && \bullet && \textcolor{rgb,255:red,0;green,89;blue,255}{\star} && \bullet && {I_1} &&&& \textcolor{white}{\bullet} & \textcolor{white}{\bullet} \\
	{P_2} & {P_3} & \bullet & \bullet & \bullet & \bullet & \bullet & \textcolor{rgb,255:red,0;green,89;blue,255}{\star} & \textcolor{rgb,255:red,0;green,89;blue,255}{\star} & \textcolor{rgb,255:red,0;green,89;blue,255}{\star} & \bullet & \bullet & {I_2} & {I_3} &&& \textcolor{white}{\bullet} & \textcolor{white}{\bullet} \\
	{P_4} & \textcolor{white}{\bullet} & \bullet && \bullet && \textcolor{rgb,255:red,0;green,89;blue,255}{\star} && \textcolor{rgb,255:red,0;green,89;blue,255}{\star} && \textcolor{rgb,255:red,0;green,89;blue,255}{\star} && {I_4} &&&& \textcolor{white}{\bullet} & \textcolor{white}{\bullet} \\
	{} & {P_5} & \textcolor{white}{\bullet} & \bullet && \textcolor{rgb,255:red,0;green,89;blue,255}{\star} && \textcolor{rgb,255:red,0;green,89;blue,255}{\star} && \textcolor{rgb,255:red,0;green,89;blue,255}{\star} && \textcolor{rgb,255:red,0;green,89;blue,255}{\star} & \textcolor{white}{\bullet} & {I_5} &&& \textcolor{white}{\bullet} & \textcolor{white}{\bullet} \\
	{P_6} & \textcolor{white}{\bullet} & \bullet && \textcolor{rgb,255:red,0;green,89;blue,255}{\star} && \textcolor{rgb,255:red,0;green,89;blue,255}{\star} && \textcolor{rgb,255:red,0;green,89;blue,255}{\star} && \textcolor{rgb,255:red,0;green,89;blue,255}{\star} && \textcolor{rgb,255:red,0;green,89;blue,255}{\star} &&&& \textcolor{white}{\bullet} & \textcolor{white}{\bullet} \\
	& {P_7} & \textcolor{white}{\bullet} & \textcolor{rgb,255:red,0;green,89;blue,255}{\star} && \textcolor{rgb,255:red,0;green,89;blue,255}{\star} && \textcolor{rgb,255:red,0;green,89;blue,255}{\star} && \textcolor{rgb,255:red,0;green,89;blue,255}{\star} && \textcolor{rgb,255:red,0;green,89;blue,255}{\star} & \textcolor{white}{\bullet} & \textcolor{rgb,255:red,0;green,89;blue,255}{\star} &&& \textcolor{white}{\bullet} & \textcolor{white}{\bullet} \\
	& \textcolor{white}{\bullet} & \textcolor{rgb,255:red,0;green,89;blue,255}{P_8} && \textcolor{rgb,255:red,0;green,89;blue,255}{\star} && \textcolor{rgb,255:red,0;green,89;blue,255}{\star} && \textcolor{rgb,255:red,0;green,89;blue,255}{\star} && \textcolor{rgb,255:red,0;green,89;blue,255}{\star} && \textcolor{rgb,255:red,0;green,89;blue,255}{\star} && \textcolor{rgb,255:red,0;green,89;blue,255}{I_8} && \textcolor{white}{\bullet}
	\arrow[from=1-3, to=2-4]
	\arrow[from=2-3, to=2-4]
	\arrow[from=3-3, to=2-4]
	\arrow[from=3-3, to=4-4]
	\arrow[from=5-3, to=4-4]
	\arrow[from=5-3, to=6-4]
	\arrow[color={rgb,255:red,0;green,89;blue,255}, squiggly, from=7-3, to=6-4]
	\arrow[color={rgb,255:red,0;green,89;blue,255}, squiggly, from=6-4, to=5-5]
	\arrow[color={rgb,255:red,0;green,89;blue,255}, squiggly, from=5-5, to=4-6]
	\arrow[color={rgb,255:red,0;green,89;blue,255}, squiggly, from=4-6, to=3-7]
	\arrow[color={rgb,255:red,0;green,89;blue,255}, squiggly, from=3-7, to=2-8]
	\arrow[from=4-4, to=3-5]
	\arrow[from=3-5, to=4-6]
	\arrow[from=4-4, to=5-5]
	\arrow[from=2-4, to=3-5]
	\arrow[from=2-4, to=2-5]
	\arrow[from=2-4, to=1-5]
	\arrow[from=1-5, to=2-6]
	\arrow[from=2-5, to=2-6]
	\arrow[from=3-5, to=2-6]
	\arrow[from=2-6, to=3-7]
	\arrow[from=2-6, to=2-7]
	\arrow[from=2-6, to=1-7]
	\arrow[from=1-7, to=2-8]
	\arrow[from=2-7, to=2-8]
	\arrow[color={rgb,255:red,0;green,89;blue,255}, squiggly, from=2-8, to=2-9]
	\arrow[color={rgb,255:red,0;green,89;blue,255}, squiggly, from=2-8, to=1-9]
	\arrow[from=1-11, to=2-12]
	\arrow[from=2-11, to=2-12]
	\arrow[color={rgb,255:red,0;green,89;blue,255}, squiggly, from=6-4, to=7-5]
	\arrow[from=2-2, to=1-3]
	\arrow[from=2-2, to=2-3]
	\arrow[from=2-2, to=3-3]
	\arrow[from=4-2, to=3-3]
	\arrow[from=6-2, to=5-3]
	\arrow[from=4-2, to=5-3]
	\arrow[from=6-2, to=7-3]
	\arrow[from=1-1, to=2-2]
	\arrow[from=2-1, to=2-2]
	\arrow[from=3-1, to=2-2]
	\arrow[from=3-1, to=4-2]
	\arrow[from=5-1, to=4-2]
	\arrow[from=5-1, to=6-2]
	\arrow[from=2-12, to=1-13]
	\arrow[from=2-12, to=2-13]
	\arrow[from=2-12, to=3-13]
	\arrow[from=3-13, to=2-14]
	\arrow[from=2-13, to=2-14]
	\arrow[from=1-13, to=2-14]
	\arrow[from=3-13, to=4-14]
	\arrow[color={rgb,255:red,0;green,89;blue,255}, squiggly, from=7-5, to=6-6]
	\arrow[color={rgb,255:red,0;green,89;blue,255}, squiggly, from=6-6, to=5-7]
	\arrow[color={rgb,255:red,0;green,89;blue,255}, squiggly, from=5-7, to=4-8]
	\arrow[color={rgb,255:red,0;green,89;blue,255}, squiggly, from=4-8, to=3-9]
	\arrow[color={rgb,255:red,0;green,89;blue,255}, squiggly, from=3-9, to=2-10]
	\arrow[color={rgb,255:red,0;green,89;blue,255}, squiggly, from=2-8, to=3-9]
	\arrow[color={rgb,255:red,0;green,89;blue,255}, squiggly, from=2-9, to=2-10]
	\arrow[color={rgb,255:red,0;green,89;blue,255}, squiggly, from=1-9, to=2-10]
	\arrow[color={rgb,255:red,0;green,89;blue,255}, squiggly, from=3-7, to=4-8]
	\arrow[color={rgb,255:red,0;green,89;blue,255}, squiggly, from=4-6, to=5-7]
	\arrow[color={rgb,255:red,0;green,89;blue,255}, squiggly, from=5-5, to=6-6]
	\arrow[from=2-10, to=2-11]
	\arrow[from=2-10, to=1-11]
	\arrow[color={rgb,255:red,0;green,89;blue,255}, squiggly, from=6-6, to=7-7]
	\arrow[color={rgb,255:red,0;green,89;blue,255}, squiggly, from=5-7, to=6-8]
	\arrow[color={rgb,255:red,0;green,89;blue,255}, squiggly, from=4-8, to=5-9]
	\arrow[color={rgb,255:red,0;green,89;blue,255}, squiggly, from=3-9, to=4-10]
	\arrow[color={rgb,255:red,0;green,89;blue,255}, squiggly, from=2-10, to=3-11]
	\arrow[from=3-11, to=2-12]
	\arrow[color={rgb,255:red,0;green,89;blue,255}, squiggly, from=6-8, to=7-9]
	\arrow[color={rgb,255:red,0;green,89;blue,255}, squiggly, from=7-7, to=6-8]
	\arrow[color={rgb,255:red,0;green,89;blue,255}, squiggly, from=6-8, to=5-9]
	\arrow[color={rgb,255:red,0;green,89;blue,255}, squiggly, from=5-9, to=6-10]
	\arrow[color={rgb,255:red,0;green,89;blue,255}, squiggly, from=7-9, to=6-10]
	\arrow[color={rgb,255:red,0;green,89;blue,255}, squiggly, from=5-9, to=4-10]
	\arrow[color={rgb,255:red,0;green,89;blue,255}, squiggly, from=4-10, to=3-11]
	\arrow[from=4-12, to=3-13]
	\arrow[color={rgb,255:red,0;green,89;blue,255}, squiggly, from=5-11, to=4-12]
	\arrow[color={rgb,255:red,0;green,89;blue,255}, squiggly, from=4-10, to=5-11]
	\arrow[color={rgb,255:red,0;green,89;blue,255}, squiggly, from=6-10, to=5-11]
	\arrow[color={rgb,255:red,0;green,89;blue,255}, squiggly, from=6-10, to=7-11]
	\arrow[color={rgb,255:red,0;green,89;blue,255}, squiggly, from=5-11, to=6-12]
	\arrow[color={rgb,255:red,0;green,89;blue,255}, squiggly, from=7-11, to=6-12]
	\arrow[color={rgb,255:red,0;green,89;blue,255}, squiggly, from=6-12, to=5-13]
	\arrow[color={rgb,255:red,0;green,89;blue,255}, squiggly, from=4-12, to=5-13]
	\arrow[color={rgb,255:red,0;green,89;blue,255}, squiggly, from=3-11, to=4-12]
	\arrow[from=5-13, to=4-14]
	\arrow[color={rgb,255:red,0;green,89;blue,255}, squiggly, from=6-12, to=7-13]
	\arrow[color={rgb,255:red,0;green,89;blue,255}, squiggly, from=7-13, to=6-14]
	\arrow[color={rgb,255:red,0;green,89;blue,255}, squiggly, from=5-13, to=6-14]
	\arrow[color={rgb,255:red,0;green,89;blue,255}, squiggly, from=6-14, to=7-15]
\end{tikzcd}\]
\caption{The pyramid region for the running example}
\label{fig:typeDpyramid2}
\end{figure}
We begin with arrows which have source or target in the bottom row of the pyramid region.  Since all of these arrows are part of a border sequence, the following lemma holds by assumption.

\begin{lemma}\label{lem:bottomrow}
All arrows in the pyramid region of AR($Q$) with either source or target in the $\tau$-orbit of $P_n$ are positive. 
\end{lemma}

We next consider arrows with source and target both in the top two rows of vertices of AR($Q$), as presented in Figure \ref{fig:typeDAR}.

\begin{lemma}\label{lem:toprows}
All arrows with both source and target in the $\tau$-orbits of $P_1$, $P_2$, $P_3$ are positive.
\end{lemma}
\begin{proof}
All but two such arrows are part of a border sequence and thus positive by assumption.  The locations of the remaining two arrows in AR($Q$) depend on the orientation of $Q$.  There are two cases up to duality (replacing $Q$ with $Q^{\op}$), which we consider separately.

\smallskip

\noindent \emph{Case 1.} Assume the arrows incident to vertices 1 and 2 are both oriented towards the branch point, as shown below (unoriented edges can point in either direction).
\begin{equation}
% https://q.uiver.app/?q=WzAsNixbMCwwLCIxIl0sWzAsMiwiMiJdLFsxLDEsIjMiXSxbMiwxLCI0Il0sWzMsMSwibi0xIl0sWzQsMSwibiJdLFswLDJdLFsxLDJdLFsyLDMsIiIsMCx7InN0eWxlIjp7ImhlYWQiOnsibmFtZSI6Im5vbmUifX19XSxbNCw1LCIiLDAseyJzdHlsZSI6eyJoZWFkIjp7Im5hbWUiOiJub25lIn19fV0sWzMsNCwiXFxjZG90cyIsMyx7InN0eWxlIjp7ImJvZHkiOnsibmFtZSI6Im5vbmUifSwiaGVhZCI6eyJuYW1lIjoibm9uZSJ9fX1dXQ==
\begin{tikzcd}
	1 \\
	& 3 & 4 & {n-1} & n \\
	2
	\arrow[from=1-1, to=2-2]
	\arrow[from=3-1, to=2-2]
	\arrow[no head, from=2-2, to=2-3]
	\arrow[no head, from=2-4, to=2-5]
	\arrow["\cdots"{marking}, draw=none, from=2-3, to=2-4]
\end{tikzcd}
\end{equation}
The $\tau$-orbits of $P_1$, $P_2$, and $P_3$ in AR($Q$) are arranged as follows.
\begin{equation}\label{eq:Dcase1}
% https://q.uiver.app/#q=WzAsMTEsWzEsMCwiUF8xIl0sWzUsMSwiXFxidWxsZXQiXSxbNiwwLCJJXzEgXFx0ZXh0eyBvciB9IElfMiJdLFsxLDEsIlBfMiJdLFs2LDEsIklfMSBcXHRleHR7IG9yIH0gSV8yIl0sWzAsMSwiUF8zIl0sWzIsMSwiXFxidWxsZXQiXSxbMywxLCJcXGJ1bGxldCJdLFs0LDEsIlxcYnVsbGV0Il0sWzMsMCwiXFxidWxsZXQiXSxbNCwwLCJcXGJ1bGxldCJdLFsxLDJdLFsxLDRdLFs1LDMsIlxcYmV0YSIsMix7InN0eWxlIjp7ImJvZHkiOnsibmFtZSI6ImRhc2hlZCJ9fX1dLFs1LDAsIlxcYWxwaGEiLDAseyJzdHlsZSI6eyJib2R5Ijp7Im5hbWUiOiJkYXNoZWQifX19XSxbMCw2XSxbMyw2XSxbNiw3XSxbNyw4LCJcXGNkb3RzIiwzLHsic3R5bGUiOnsiYm9keSI6eyJuYW1lIjoibm9uZSJ9LCJoZWFkIjp7Im5hbWUiOiJub25lIn19fV0sWzgsMV0sWzYsOV0sWzEwLDFdLFs5LDEwLCJcXGNkb3RzIiwzLHsic3R5bGUiOnsiYm9keSI6eyJuYW1lIjoibm9uZSJ9LCJoZWFkIjp7Im5hbWUiOiJub25lIn19fV1d
\begin{tikzcd}
	& {P_1} && \bullet & \bullet && {I_1 \text{ or } I_2} \\
	{P_3} & {P_2} & \bullet & \bullet & \bullet & \bullet & {I_1 \text{ or } I_2}
	\arrow[from=2-6, to=1-7]
	\arrow[from=2-6, to=2-7]
	\arrow["\beta"', dashed, from=2-1, to=2-2]
	\arrow["\alpha", dashed, from=2-1, to=1-2]
	\arrow[from=1-2, to=2-3]
	\arrow[from=2-2, to=2-3]
	\arrow[from=2-3, to=2-4]
	\arrow["\cdots"{marking, allow upside down}, draw=none, from=2-4, to=2-5]
	\arrow[from=2-5, to=2-6]
	\arrow[from=2-3, to=1-4]
	\arrow[from=1-5, to=2-6]
	\arrow["\cdots"{marking, allow upside down}, draw=none, from=1-4, to=1-5]
\end{tikzcd}
\end{equation}
Here, the arrows which are positive by assumption are solid, and the two remaining arrows which we need to consider are dashed and labeled $\alpha, \beta$.

Now from standard descriptions of projective and injective representations in terms of paths (see for example \cite[\S 2.1]{Schiffler:2014aa}), we obtain short exact sequences:
\begin{equation}\label{eq:Dcase1seq}
\begin{split}
    &0 \to P_3 \xto{\alpha} P_1 \to I_1 \to 0,\\
    &0 \to P_3 \xto{\beta} P_2 \to I_2 \to 0.
\end{split}
\end{equation}
To put it another way, in this orientation we have $P_3\simeq \rad P_1 \simeq \rad P_2$, and both $I_1$ and $I_2$ are simple.
Then we see in \eqref{eq:Dcase1} that there is a path of positive arrows from $P_1$ to $I_1$, so $\mu(P_1) < \mu(I_1)$, and thus from \eqref{eq:Dcase1seq} we get $\alpha$ is positive as well.  Then $\beta$ is also positive by replacing vertex $1$ with $2$ in this proof.

\smallskip

\noindent \emph{Case 2.} Assume the arrow incident to vertex 1 points towards the branch point, and the arrow incident to vertex 2 points towards that vertex, as shown in (\ref{eq:typeDexample}) (unoriented edges can point in either direction).

\begin{equation}\label{eq:typeDexample}
% https://q.uiver.app/?q=WzAsNixbMCwwLCIxIl0sWzAsMiwiMiJdLFsxLDEsIjMiXSxbMiwxLCI0Il0sWzMsMSwibi0xIl0sWzQsMSwibiJdLFswLDJdLFsyLDMsIiIsMCx7InN0eWxlIjp7ImhlYWQiOnsibmFtZSI6Im5vbmUifX19XSxbNCw1LCIiLDAseyJzdHlsZSI6eyJoZWFkIjp7Im5hbWUiOiJub25lIn19fV0sWzMsNCwiXFxjZG90cyIsMyx7InN0eWxlIjp7ImJvZHkiOnsibmFtZSI6Im5vbmUifSwiaGVhZCI6eyJuYW1lIjoibm9uZSJ9fX1dLFsyLDFdXQ==
\begin{tikzcd}
	1 \\
	& 3 & 4 & {n-1} & n \\
	2
	\arrow[from=1-1, to=2-2]
	\arrow[no head, from=2-2, to=2-3]
	\arrow[no head, from=2-4, to=2-5]
	\arrow["\cdots"{marking}, draw=none, from=2-3, to=2-4]
	\arrow[from=2-2, to=3-1]
\end{tikzcd}
\end{equation}

The $\tau$-orbits of $P_1$, $P_2$, and $P_3$ in AR($Q$) are arranged as follows:
\begin{equation}\label{eq:Dcase2}
% https://q.uiver.app/#q=WzAsMTQsWzAsMCwiUF8yIl0sWzEsMSwiUF8zIl0sWzIsMCwiXFxidWxsZXQiXSxbMiwxLCJQXzEiXSxbMywxLCJcXGJ1bGxldCJdLFs0LDEsIlxcYnVsbGV0Il0sWzUsMSwiXFxidWxsZXQiXSxbNiwxLCJcXGJ1bGxldCJdLFs3LDAsIklfMiJdLFs4LDEsIklfMyJdLFs3LDEsIlxcYnVsbGV0Il0sWzksMSwiSV8xIl0sWzQsMCwiXFxidWxsZXQiXSxbNSwwLCJcXGJ1bGxldCJdLFswLDFdLFsxLDJdLFsxLDMsIlxcYWxwaGEiLDIseyJzdHlsZSI6eyJib2R5Ijp7Im5hbWUiOiJkYXNoZWQifX19XSxbMyw0XSxbMiw0XSxbNCw1XSxbNiw3XSxbNSw2LCJcXGNkb3RzIiwxLHsic3R5bGUiOnsiYm9keSI6eyJuYW1lIjoibm9uZSJ9LCJoZWFkIjp7Im5hbWUiOiJub25lIn19fV0sWzcsOF0sWzgsOSwiXFxiZXRhIiwwLHsic3R5bGUiOnsiYm9keSI6eyJuYW1lIjoiZGFzaGVkIn19fV0sWzcsMTBdLFsxMCw5XSxbOSwxMV0sWzQsMTJdLFsxMyw3XSxbMTIsMTMsIlxcY2RvdHMiLDMseyJzdHlsZSI6eyJib2R5Ijp7Im5hbWUiOiJub25lIn0sImhlYWQiOnsibmFtZSI6Im5vbmUifX19XV0=
\begin{tikzcd}
	{P_2} && \bullet && \bullet & \bullet && {I_2} \\
	& {P_3} & {P_1} & \bullet & \bullet & \bullet & \bullet & \bullet & {I_3} & {I_1}
	\arrow[from=1-1, to=2-2]
	\arrow[from=2-2, to=1-3]
	\arrow["\alpha"', dashed, from=2-2, to=2-3]
	\arrow[from=2-3, to=2-4]
	\arrow[from=1-3, to=2-4]
	\arrow[from=2-4, to=2-5]
	\arrow[from=2-6, to=2-7]
	\arrow["\cdots"{description}, draw=none, from=2-5, to=2-6]
	\arrow[from=2-7, to=1-8]
	\arrow["\beta", dashed, from=1-8, to=2-9]
	\arrow[from=2-7, to=2-8]
	\arrow[from=2-8, to=2-9]
	\arrow[from=2-9, to=2-10]
	\arrow[from=2-4, to=1-5]
	\arrow[from=1-6, to=2-7]
	\arrow["\cdots"{marking, allow upside down}, draw=none, from=1-5, to=1-6]
\end{tikzcd}
\end{equation}
Here, the arrows which are positive by assumption are solid, and the two remaining arrows which we need to consider, are dashed and labeled $\alpha, \beta$.

Similar to the last case, we have short exact sequences:
\begin{equation}\label{eq:Dcase2seq}
\begin{split}
        &0 \to P_3 \xto{\alpha} P_1 \to I_1 \to 0,\\
    &0 \to P_2 \to I_2 \xto{\beta} I_3 \to 0.
\end{split}
\end{equation}
Following solid paths within AR($Q$) and using these short exact sequences then again shows that both $\alpha$ and $\beta$ are positive.
\end{proof}

\begin{figure}\label{fig:typeDAR2}
\[\begin{tikzcd}[column sep={3em,between origins},row sep={3em,between origins}]
	{P_1} & \textcolor{white}{\bullet} & \bullet && \bullet && \bullet && \bullet && \bullet && {I_1} &&&& \textcolor{white}{\bullet} & \textcolor{white}{\bullet} \\
	{P_2} & {P_3} & \bullet & \bullet & \bullet & \bullet & \bullet & \bullet & \bullet & \bullet & \bullet & \bullet & {I_2} & {I_3} &&& \textcolor{white}{\bullet} & \textcolor{white}{\bullet} \\
	{P_4} & \textcolor{white}{\bullet} & \bullet && \bullet && \bullet && \bullet && \bullet && {I_4} &&&& \textcolor{white}{\bullet} & \textcolor{white}{\bullet} \\
	{} & {P_5} & \textcolor{white}{\bullet} & \bullet && \bullet && \bullet && \bullet && \bullet & \textcolor{white}{\bullet} & {I_5} &&& \textcolor{white}{\bullet} & \textcolor{white}{\bullet} \\
	{P_6} & \textcolor{white}{\bullet} & \bullet && \bullet && \bullet && \bullet && \bullet && {I_6} &&&& \textcolor{white}{\bullet} & \textcolor{white}{\bullet} \\
	& {P_7} & \textcolor{white}{\bullet} & \bullet && \bullet && \bullet && \bullet && \bullet & \textcolor{white}{\bullet} & {I_7} &&& \textcolor{white}{\bullet} & \textcolor{white}{\bullet} \\
	& \textcolor{white}{\bullet} & {P_8} && \bullet && \bullet && \bullet && \bullet && \bullet && {I_8} && \textcolor{white}{\bullet}
	\arrow[color={rgb,255:red,255;green,51;blue,78}, from=1-3, to=2-4]
	\arrow[color={rgb,255:red,255;green,51;blue,78}, from=2-3, to=2-4]
	\arrow[from=3-3, to=2-4]
	\arrow[from=3-3, to=4-4]
	\arrow[from=5-3, to=4-4]
	\arrow[from=5-3, to=6-4]
	\arrow[color={rgb,255:red,255;green,51;blue,78}, from=7-3, to=6-4]
	\arrow[from=6-4, to=5-5]
	\arrow[from=5-5, to=4-6]
	\arrow[from=4-6, to=3-7]
	\arrow[from=3-7, to=2-8]
	\arrow[from=4-4, to=3-5]
	\arrow[from=3-5, to=4-6]
	\arrow[from=4-4, to=5-5]
	\arrow[from=2-4, to=3-5]
	\arrow[color={rgb,255:red,255;green,51;blue,78}, from=2-4, to=2-5]
	\arrow[color={rgb,255:red,255;green,51;blue,78}, from=2-4, to=1-5]
	\arrow[color={rgb,255:red,255;green,51;blue,78}, from=1-5, to=2-6]
	\arrow[color={rgb,255:red,255;green,51;blue,78}, from=2-5, to=2-6]
	\arrow[from=3-5, to=2-6]
	\arrow[from=2-6, to=3-7]
	\arrow[color={rgb,255:red,255;green,51;blue,78}, from=2-6, to=2-7]
	\arrow[color={rgb,255:red,255;green,51;blue,78}, from=2-6, to=1-7]
	\arrow[color={rgb,255:red,255;green,51;blue,78}, from=1-7, to=2-8]
	\arrow[color={rgb,255:red,255;green,51;blue,78}, from=2-7, to=2-8]
	\arrow[color={rgb,255:red,255;green,51;blue,78}, from=2-8, to=2-9]
	\arrow[color={rgb,255:red,255;green,51;blue,78}, from=2-8, to=1-9]
	\arrow[color={rgb,255:red,255;green,51;blue,78}, from=1-11, to=2-12]
	\arrow[color={rgb,255:red,255;green,51;blue,78}, from=2-11, to=2-12]
	\arrow[color={rgb,255:red,255;green,51;blue,78}, from=6-4, to=7-5]
	\arrow[color={rgb,255:red,255;green,51;blue,78}, from=2-2, to=1-3]
	\arrow[color={rgb,255:red,255;green,51;blue,78}, from=2-2, to=2-3]
	\arrow[from=2-2, to=3-3]
	\arrow[from=4-2, to=3-3]
	\arrow[from=6-2, to=5-3]
	\arrow[from=4-2, to=5-3]
	\arrow[from=6-2, to=7-3]
	\arrow[color={rgb,255:red,255;green,51;blue,78}, from=1-1, to=2-2]
	\arrow[color={rgb,255:red,255;green,51;blue,78}, from=2-1, to=2-2]
	\arrow[from=3-1, to=2-2]
	\arrow[from=3-1, to=4-2]
	\arrow[from=5-1, to=4-2]
	\arrow[from=5-1, to=6-2]
	\arrow[color={rgb,255:red,255;green,51;blue,78}, from=2-12, to=1-13]
	\arrow[color={rgb,255:red,255;green,51;blue,78}, from=2-12, to=2-13]
	\arrow[from=2-12, to=3-13]
	\arrow[from=3-13, to=2-14]
	\arrow[color={rgb,255:red,255;green,51;blue,78}, from=2-13, to=2-14]
	\arrow[color={rgb,255:red,255;green,51;blue,78}, from=1-13, to=2-14]
	\arrow[from=3-13, to=4-14]
	\arrow[color={rgb,255:red,255;green,51;blue,78}, from=7-5, to=6-6]
	\arrow[from=6-6, to=5-7]
	\arrow[from=5-7, to=4-8]
	\arrow[from=4-8, to=3-9]
	\arrow[from=3-9, to=2-10]
	\arrow[from=2-8, to=3-9]
	\arrow[color={rgb,255:red,255;green,51;blue,78}, from=2-9, to=2-10]
	\arrow[color={rgb,255:red,255;green,51;blue,78}, from=1-9, to=2-10]
	\arrow[from=3-7, to=4-8]
	\arrow[from=4-6, to=5-7]
	\arrow[from=5-5, to=6-6]
	\arrow[color={rgb,255:red,255;green,51;blue,78}, from=2-10, to=2-11]
	\arrow[color={rgb,255:red,255;green,51;blue,78}, from=2-10, to=1-11]
	\arrow[color={rgb,255:red,255;green,51;blue,78}, from=6-6, to=7-7]
	\arrow[from=5-7, to=6-8]
	\arrow[from=4-8, to=5-9]
	\arrow[from=3-9, to=4-10]
	\arrow[from=2-10, to=3-11]
	\arrow[from=3-11, to=2-12]
	\arrow[color={rgb,255:red,255;green,51;blue,78}, from=6-8, to=7-9]
	\arrow[color={rgb,255:red,255;green,51;blue,78}, from=7-7, to=6-8]
	\arrow[from=6-8, to=5-9]
	\arrow[from=5-9, to=6-10]
	\arrow[color={rgb,255:red,255;green,51;blue,78}, from=7-9, to=6-10]
	\arrow[from=5-9, to=4-10]
	\arrow[from=4-10, to=3-11]
	\arrow[from=4-12, to=3-13]
	\arrow[from=5-11, to=4-12]
	\arrow[from=4-10, to=5-11]
	\arrow[from=6-10, to=5-11]
	\arrow[color={rgb,255:red,255;green,51;blue,78}, from=6-10, to=7-11]
	\arrow[from=5-11, to=6-12]
	\arrow[color={rgb,255:red,255;green,51;blue,78}, from=7-11, to=6-12]
	\arrow[from=6-12, to=5-13]
	\arrow[from=4-12, to=5-13]
	\arrow[from=3-11, to=4-12]
	\arrow[from=5-13, to=4-14]
	\arrow[color={rgb,255:red,255;green,51;blue,78}, from=6-12, to=7-13]
	\arrow[color={rgb,255:red,255;green,51;blue,78}, from=7-13, to=6-14]
	\arrow[from=5-13, to=6-14]
	\arrow[color={rgb,255:red,255;green,51;blue,78}, from=6-14, to=7-15]
\end{tikzcd}\]
\caption{The solid red arrows are positive due to Lemma \ref{lem:bottomrow} and Lemma \ref{lem:toprows}.}
\end{figure}
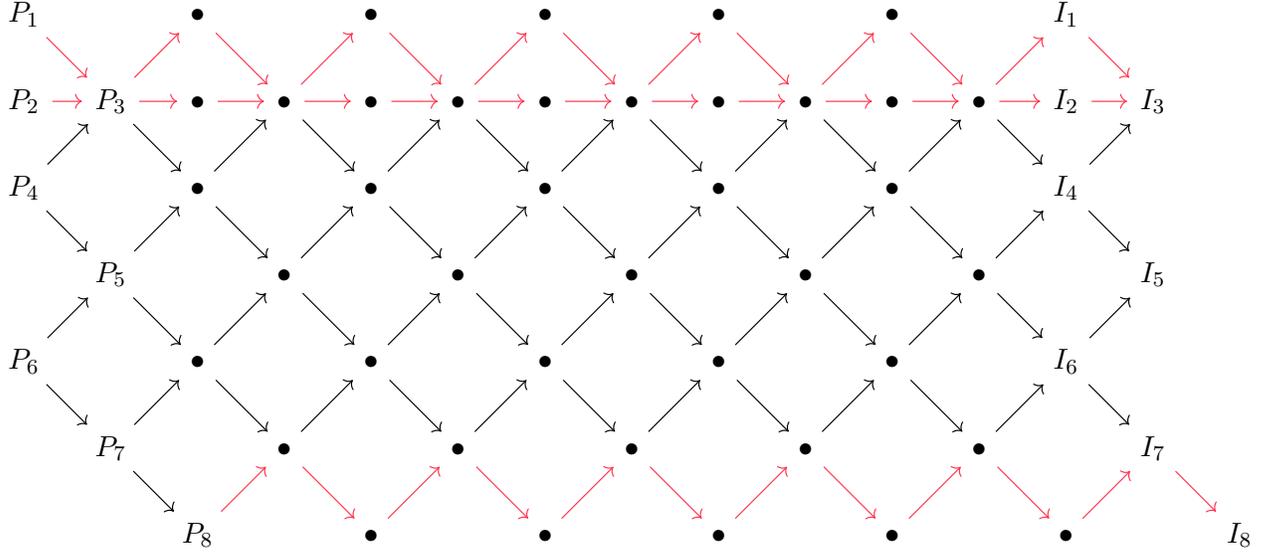

We can now deal with the whole pyramid region.

\begin{lemma}\label{lem:typeDpyramid}
With the setup above, every arrow in the pyramid region is positive.
\end{lemma}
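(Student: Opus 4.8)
The plan is to propagate positivity outward from the boundary arrows already handled in Lemmas \ref{lem:bottomrow} and \ref{lem:toprows} until every arrow of the pyramid is covered. The engine is the elementary observation that an irreducible map $a\colon V\to W$ is positive exactly when the short exact sequence containing it is positive. Since every arrow of $\text{AR}(Q)$ is a monomorphism or an epimorphism, there are two cases: if $a$ is an epimorphism then $0\to\ker a\to V\xto{a}W\to 0$ is exact and $a$ is positive if and only if $\mu(\ker a)<\mu(V)$; if $a$ is a monomorphism then $0\to V\xto{a}W\to\coker a\to 0$ is exact and $a$ is positive if and only if $\mu(W)<\mu(\coker a)$. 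Thus proving a single arrow positive reduces to establishing one strict inequality, which I will obtain from a path of arrows already known to be positive, since a positive path from $X$ to $Y$ forces $\mu(X)<\mu(Y)$ by the seesaw property.

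The key point is that Lemma \ref{lem:ladder} locates the kernel of an epimorphism precisely. Given an epimorphism $a$ in the pyramid, I realize $a$ as the top rung of a ladder as in \eqref{eq:ladder}; then $\ker a\simeq Z$ is the bottom-left vertex of that ladder, and the left edge $Z\to X_0\to\cdots\to X_k$ is a path ending at the source $X_k$ of $a$. If every arrow of this left edge is already known positive, then $\mu(Z)<\mu(X_k)$, which by the seesaw property forces the sequence $0\to Z\to X_k\xto{a}Y_k\to 0$ to be positive, hence $a$ to be positive. Because the left edge lies strictly down and to the left of $a$, this is exactly the inductive step of a sweep of the pyramid outward from the $P_n$ corner, with the base case supplied by the bottom row (Lemma \ref{lem:bottomrow}, whose $\tau$-orbit contains both $P_n$ and $I_n$).

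To cover the monomorphism-arrows I use duality, exactly as in the proof of Lemma \ref{lem:toprows}: under $\text{AR}(Q^{\op})=\text{AR}(Q)^{\op}$ a monomorphism of $Q$ becomes an epimorphism of $Q^{\op}$, the hypothesis of positivity on border sequences is preserved, and Lemmas \ref{lem:bottomrow} and \ref{lem:toprows} apply to $Q^{\op}$ since they were proved for an arbitrary type $\mathbb{D}$ quiver with $\mu$ positive on border sequences. So it suffices to run the ladder sweep of the previous paragraph once for $Q$ (handling the epimorphism-arrows) and once for $Q^{\op}$ (handling, after translation, the monomorphism-arrows), in each case locating the relevant kernel at the foot of a ladder reaching back into the already-processed region. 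The diamond meshes of the interior then present no extra difficulty: each of their four arrows falls into one of these two cases, and the diagonal inequalities $\mu(\tau V)<\mu(V)$ that feed the paths follow from positive paths through the walls, which are processed first.

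The main obstacle is the single triple mesh at the apex of the pyramid, where the two $\tau$-orbits meet at the branch vertex. There the ladder bookkeeping degenerates, so the clean inductive step does not apply verbatim; instead I expect to verify positivity of the arrows adjacent to the triple mesh directly, using the explicit short exact sequence $0\to\tau V\to U_1\oplus U_2\oplus U_3\to V\to 0$ together with the positive paths feeding into it from the walls and interior already established. The genuinely delicate point is arranging the global induction order so that the two dual sweeps and the triple-mesh step interlock without circularity -- that is, so no arrow is invoked before it has been proved positive. This is precisely the configuration that must be re-analyzed, rather than merely transcribed, when the same strategy is pushed through in type $\mathbb{E}$.
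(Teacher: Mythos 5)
Your overall strategy is the paper's: propagate positivity from the arrows covered by Lemmas \ref{lem:bottomrow} and \ref{lem:toprows} by realizing each remaining arrow as the top rung of a ladder, locating its kernel via Lemma \ref{lem:ladder}, and closing with the see-saw property, with duality exchanging the two arrow directions. But there are two genuine gaps, both of which you flag yourself without resolving. First, your inductive step feeds the inequality $\mu(Z)<\mu(X_k)$ from the \emph{left edge} $Z\to X_0\to\cdots\to X_k$ of the ladder. Its last arrow $X_{k-1}\to X_k$ is the upward arrow into the source of $a$, and it lies in the same row of arrows as $a$ itself (both join the $\tau$-orbits of $P_j$ and $P_{j+1}$); it is not ``strictly down and to the left'' in any sense a row-by-row induction can use. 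So your epimorphism sweep needs same-row monomorphism arrows and, dually, vice versa --- exactly the circularity you call ``the genuinely delicate point'' and leave open. The paper sidesteps this entirely by running the chain along the bottom and right edge of the ladder, $Z\to X_0\to Y_0\to\cdots\to Y_k$, obtaining $\mu(Z)<\mu(Y_k)$ instead; every arrow of that chain joins a strictly lower pair of $\tau$-orbits, so a single induction upward on rows of arrows (treating both directions at each row, one of them after passing to $Q^{\op}$) closes with no interlocking to arrange.

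Second, the triple mesh is not actually an obstacle, but your proposal leaves it as one. Every arrow with both endpoints in the $\tau$-orbits of $P_1,P_2,P_3$ --- in particular the problematic part of the triple mesh --- is already positive by Lemma \ref{lem:toprows}, and the remaining arrows incident to the mesh, those joining the orbits of $P_3$ and $P_4$, are ordinary top rungs of full-height ladders contained in the pyramid, so the ladder step applies verbatim and nothing degenerates. Your fallback of using $0\to\tau V\to U_1\oplus U_2\oplus U_3\to V\to 0$ would not suffice anyway: positivity of that sequence gives $\mu(\tau V)<\mu(V)$ but says nothing directly about the six individual irreducible maps, each of which sits in a different short exact sequence. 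As written, then, the proposal is a correct plan for the generic ladder region but not yet a proof: the induction order is not pinned down and the apex case is deferred.
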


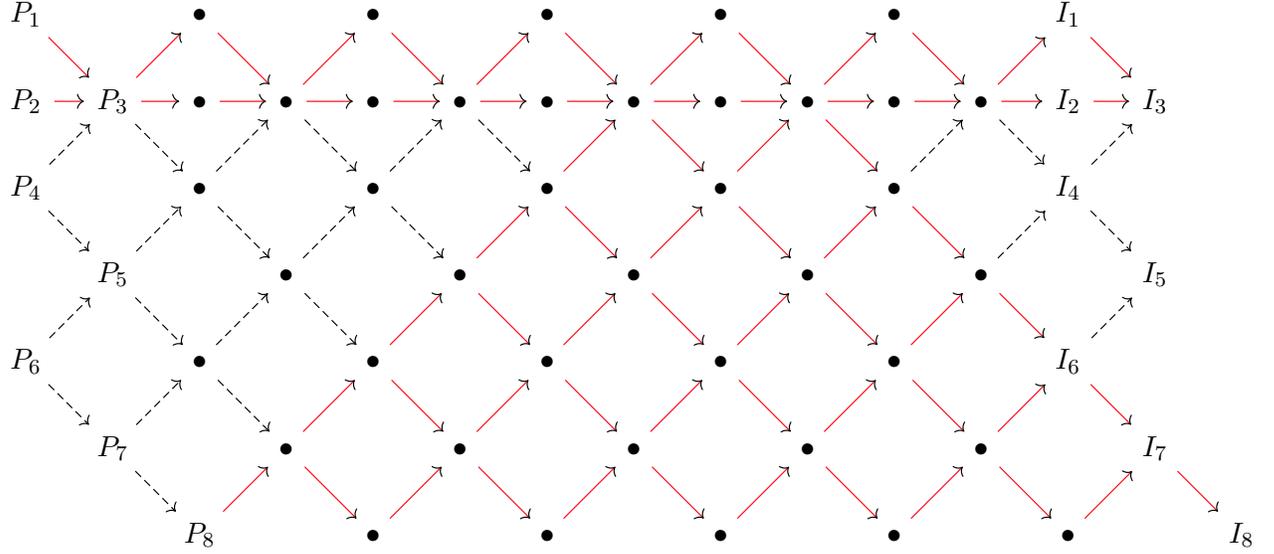
\begin{figure}\label{fig:typeDAR3}
\[\begin{tikzcd}[column sep={3em,between origins},row sep={3em,between origins}]
	{P_1} & \textcolor{white}{\bullet} & \bullet && \bullet && \bullet && \bullet && \bullet && {I_1} &&&& \textcolor{white}{\bullet} & \textcolor{white}{\bullet} \\
	{P_2} & {P_3} & \bullet & \bullet & \bullet & \bullet & \bullet & \bullet & \bullet & \bullet & \bullet & \bullet & {I_2} & {I_3} &&& \textcolor{white}{\bullet} & \textcolor{white}{\bullet} \\
	{P_4} & \textcolor{white}{\bullet} & \bullet && \bullet & {} & \bullet && \bullet && \bullet && {I_4} &&&& \textcolor{white}{\bullet} & \textcolor{white}{\bullet} \\
	{} & {P_5} & \textcolor{white}{\bullet} & \bullet && \bullet && \bullet && \bullet && \bullet & \textcolor{white}{\bullet} & {I_5} &&& \textcolor{white}{\bullet} & \textcolor{white}{\bullet} \\
	{P_6} & \textcolor{white}{\bullet} & \bullet && \bullet && \bullet && \bullet && \bullet && {I_6} &&&& \textcolor{white}{\bullet} & \textcolor{white}{\bullet} \\
	& {P_7} & \textcolor{white}{\bullet} & \bullet && \bullet && \bullet && \bullet && \bullet & \textcolor{white}{\bullet} & {I_7} &&& \textcolor{white}{\bullet} & \textcolor{white}{\bullet} \\
	& \textcolor{white}{\bullet} & {P_8} && \bullet && \bullet && \bullet && \bullet && \bullet && {I_8} && \textcolor{white}{\bullet}
	\arrow[draw={rgb,255:red,255;green,5;blue,18}, from=1-3, to=2-4]
	\arrow[draw={rgb,255:red,255;green,5;blue,18}, from=2-3, to=2-4]
	\arrow[dashed, from=3-3, to=2-4]
	\arrow[dashed, from=3-3, to=4-4]
	\arrow[dashed, from=5-3, to=4-4]
	\arrow[dashed, from=5-3, to=6-4]
	\arrow[draw={rgb,255:red,255;green,5;blue,18}, from=7-3, to=6-4]
	\arrow[draw={rgb,255:red,255;green,5;blue,18}, from=6-4, to=5-5]
	\arrow[draw={rgb,255:red,255;green,5;blue,18}, from=5-5, to=4-6]
	\arrow[draw={rgb,255:red,255;green,5;blue,18}, from=4-6, to=3-7]
	\arrow[draw={rgb,255:red,255;green,5;blue,18}, from=3-7, to=2-8]
	\arrow[dashed, from=4-4, to=3-5]
	\arrow[dashed, from=3-5, to=4-6]
	\arrow[dashed, from=4-4, to=5-5]
	\arrow[dashed, from=2-4, to=3-5]
	\arrow[draw={rgb,255:red,255;green,5;blue,18}, from=2-4, to=2-5]
	\arrow[draw={rgb,255:red,255;green,5;blue,18}, from=2-4, to=1-5]
	\arrow[draw={rgb,255:red,255;green,5;blue,18}, from=1-5, to=2-6]
	\arrow[draw={rgb,255:red,255;green,5;blue,18}, from=2-5, to=2-6]
	\arrow[dashed, from=3-5, to=2-6]
	\arrow[dashed, from=2-6, to=3-7]
	\arrow[draw={rgb,255:red,255;green,5;blue,18}, from=2-6, to=2-7]
	\arrow[draw={rgb,255:red,255;green,5;blue,18}, from=2-6, to=1-7]
	\arrow[draw={rgb,255:red,255;green,5;blue,18}, from=1-7, to=2-8]
	\arrow[draw={rgb,255:red,255;green,5;blue,18}, from=2-7, to=2-8]
	\arrow[draw={rgb,255:red,255;green,5;blue,18}, from=2-8, to=2-9]
	\arrow[draw={rgb,255:red,255;green,5;blue,18}, from=2-8, to=1-9]
	\arrow[draw={rgb,255:red,255;green,5;blue,18}, from=1-11, to=2-12]
	\arrow[draw={rgb,255:red,255;green,5;blue,18}, from=2-11, to=2-12]
	\arrow[draw={rgb,255:red,255;green,5;blue,18}, from=6-4, to=7-5]
	\arrow[draw={rgb,255:red,255;green,5;blue,18}, from=2-2, to=1-3]
	\arrow[draw={rgb,255:red,255;green,5;blue,18}, from=2-2, to=2-3]
	\arrow[dashed, from=2-2, to=3-3]
	\arrow[dashed, from=4-2, to=3-3]
	\arrow[dashed, from=6-2, to=5-3]
	\arrow[dashed, from=4-2, to=5-3]
	\arrow[dashed, from=6-2, to=7-3]
	\arrow[draw={rgb,255:red,255;green,5;blue,18}, from=1-1, to=2-2]
	\arrow[draw={rgb,255:red,255;green,5;blue,18}, from=2-1, to=2-2]
	\arrow[dashed, from=3-1, to=2-2]
	\arrow[dashed, from=3-1, to=4-2]
	\arrow[dashed, from=5-1, to=4-2]
	\arrow[dashed, from=5-1, to=6-2]
	\arrow[draw={rgb,255:red,255;green,5;blue,18}, from=2-12, to=1-13]
	\arrow[draw={rgb,255:red,255;green,5;blue,18}, from=2-12, to=2-13]
	\arrow[dashed, from=2-12, to=3-13]
	\arrow[dashed, from=3-13, to=2-14]
	\arrow[draw={rgb,255:red,255;green,5;blue,18}, from=2-13, to=2-14]
	\arrow[draw={rgb,255:red,255;green,5;blue,18}, from=1-13, to=2-14]
	\arrow[dashed, from=3-13, to=4-14]
	\arrow[draw={rgb,255:red,255;green,5;blue,18}, from=7-5, to=6-6]
	\arrow[draw={rgb,255:red,255;green,5;blue,18}, from=6-6, to=5-7]
	\arrow[draw={rgb,255:red,255;green,5;blue,18}, from=5-7, to=4-8]
	\arrow[draw={rgb,255:red,255;green,5;blue,18}, from=4-8, to=3-9]
	\arrow[draw={rgb,255:red,255;green,5;blue,18}, from=3-9, to=2-10]
	\arrow[draw={rgb,255:red,255;green,5;blue,18}, from=2-8, to=3-9]
	\arrow[draw={rgb,255:red,255;green,5;blue,18}, from=2-9, to=2-10]
	\arrow[draw={rgb,255:red,255;green,5;blue,18}, from=1-9, to=2-10]
	\arrow[draw={rgb,255:red,255;green,5;blue,18}, from=3-7, to=4-8]
	\arrow[draw={rgb,255:red,255;green,5;blue,18}, from=4-6, to=5-7]
	\arrow[draw={rgb,255:red,255;green,5;blue,18}, from=5-5, to=6-6]
	\arrow[draw={rgb,255:red,255;green,5;blue,18}, from=2-10, to=2-11]
	\arrow[draw={rgb,255:red,255;green,5;blue,18}, from=2-10, to=1-11]
	\arrow[draw={rgb,255:red,255;green,5;blue,18}, from=6-6, to=7-7]
	\arrow[draw={rgb,255:red,255;green,5;blue,18}, from=5-7, to=6-8]
	\arrow[draw={rgb,255:red,255;green,5;blue,18}, from=4-8, to=5-9]
	\arrow[draw={rgb,255:red,255;green,5;blue,18}, from=3-9, to=4-10]
	\arrow[draw={rgb,255:red,255;green,5;blue,18}, from=2-10, to=3-11]
	\arrow[dashed, from=3-11, to=2-12]
	\arrow[draw={rgb,255:red,255;green,5;blue,18}, from=6-8, to=7-9]
	\arrow[draw={rgb,255:red,255;green,5;blue,18}, from=7-7, to=6-8]
	\arrow[draw={rgb,255:red,255;green,5;blue,18}, from=6-8, to=5-9]
	\arrow[draw={rgb,255:red,255;green,5;blue,18}, from=5-9, to=6-10]
	\arrow[draw={rgb,255:red,255;green,5;blue,18}, from=7-9, to=6-10]
	\arrow[draw={rgb,255:red,255;green,5;blue,18}, from=5-9, to=4-10]
	\arrow[draw={rgb,255:red,255;green,5;blue,18}, from=4-10, to=3-11]
	\arrow[dashed, from=4-12, to=3-13]
	\arrow[draw={rgb,255:red,255;green,5;blue,18}, from=5-11, to=4-12]
	\arrow[draw={rgb,255:red,255;green,5;blue,18}, from=4-10, to=5-11]
	\arrow[draw={rgb,255:red,255;green,5;blue,18}, from=6-10, to=5-11]
	\arrow[draw={rgb,255:red,255;green,5;blue,18}, from=6-10, to=7-11]
	\arrow[draw={rgb,255:red,255;green,5;blue,18}, from=5-11, to=6-12]
	\arrow[draw={rgb,255:red,255;green,5;blue,18}, from=7-11, to=6-12]
	\arrow[draw={rgb,255:red,255;green,5;blue,18}, from=6-12, to=5-13]
	\arrow[draw={rgb,255:red,255;green,5;blue,18}, from=4-12, to=5-13]
	\arrow[draw={rgb,255:red,255;green,5;blue,18}, from=3-11, to=4-12]
	\arrow[dashed, from=5-13, to=4-14]
	\arrow[draw={rgb,255:red,255;green,5;blue,18}, from=6-12, to=7-13]
	\arrow[draw={rgb,255:red,255;green,5;blue,18}, from=7-13, to=6-14]
	\arrow[draw={rgb,255:red,255;green,5;blue,18}, from=5-13, to=6-14]
	\arrow[draw={rgb,255:red,255;green,5;blue,18}, from=6-14, to=7-15]
\end{tikzcd}\]
\caption{The solid red arrows are positive by results through Lemma \ref{lem:typeDpyramid}.}
\end{figure}

\begin{proof}
We perform induction upwards on the rows of arrows in the pyramid region. Let $X \xto{a} Y$ be an arrow in the pyramid region.  By Lemma \ref{lem:toprows}, we may assume $X$ and $Y$ are not both in the $\tau$-orbits of $P_1, P_2, P_3$.
Replacing $Q$ with $Q^{\mathrm{op}}$ if necessary, without loss of generality we may assume $X$ is in the $\tau$-orbit of $P_i$ and $Y$ is in the $\tau$-orbit of $P_{i+1}$ for some $i\geq 3$, meaning that $a$ points downwards (and rightwards) in Figure \ref{fig:typeDAR}.
For the base case, if $i+1=n$ then $a$ is positive by Lemma \ref{lem:bottomrow}. 

If $i+1<n$, then $a$ is the top rung of a ladder as in \eqref{eq:ladder}, with $X=X_k$ and $Y=Y_k$ in that diagram, and we obtain a short exact sequence $0 \to Z \to X \xto{a} Y \to 0$ by Lemma \ref{lem:ladder}.  Since this ladder is contained entirely in the pyramid, we have by induction that the arrows in lower rows are positive, so 
\begin{equation}
    \mu(Z) < \mu(X_0) < \mu (Y_0) < \cdots < \mu(Y_k)=\mu(Y).
\end{equation}
By the see-saw property, $a$ is positive.
\end{proof}

We next will show that arrows ``perpendicular'' to the pyramid walls are positive.  As before, we just deal with the region to the left of the pyramid, and the region to the right is obtained by duality.
We formalize this in the following notation:

\begin{notation}\label{not:bi}
For $4 \leq i \leq n$, let $b_i$ be the unique arrow  in $AR(Q)$ (if such an arrow exists) which is not in the pyramid, but has target on the pyramid wall in the $\tau$-orbit of $P_i$.
We say that an arrow of the form $\tau^k b_i$, for some $k \geq 0$, is \emph{perpendicular to the left wall of the pyramid}. 

For $4 \leq i \leq n$, let $a_i$ be the unique arrow in $AR(Q)$ which is in the left wall of the pyramid with source in the $\tau$-orbit of $P_i$.
An arrow of the form $\tau^k a_i$, for some $k \geq 0$, is \emph{parallel to the left wall of the pyramid}.
\end{notation}

To avoid introducing more technical notation for ranges of the indices, when using the notations $\tau^k a_i, \tau^k b_i$ we always assume that $i$ and $k$ range over those values for which the corresponding arrow exists, depending on the orientation of $Q$.

A region of arrows perpendicular and parallel to the pyramid wall is illustrated in Figure \ref{fig:perp&par}. The squiggly blue arrows belong to the pyramid, with the arrows $a_i$ belonging to the left pyramid wall, and the solid black arrows are either parallel or perpendicular to the pyramid.

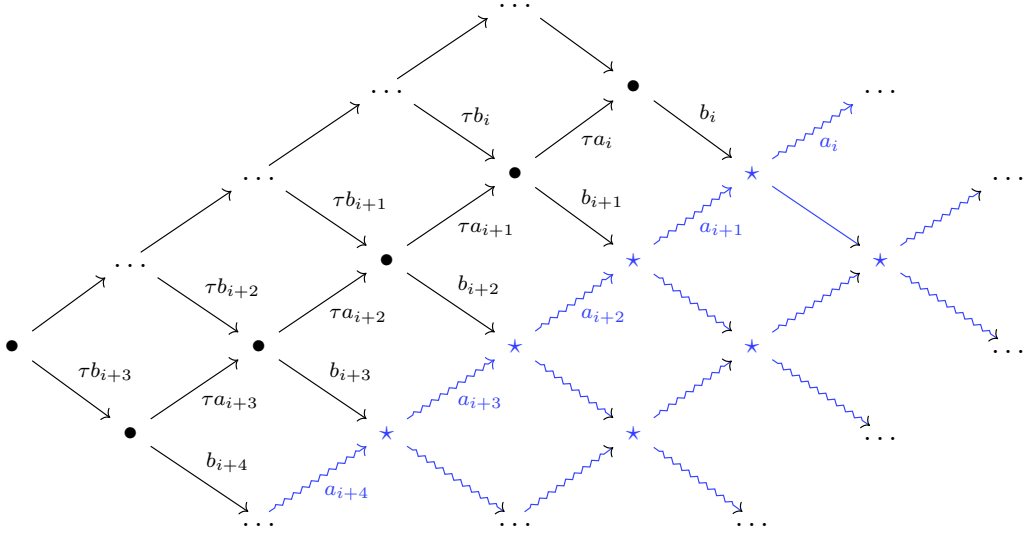
\begin{figure}[h]
\[\begin{tikzcd}
	\textcolor{white}{\bullet} & \textcolor{white}{\bullet} & \textcolor{white}{\bullet} & \textcolor{white}{\bullet} & \dots & \textcolor{white}{\bullet} & \textcolor{white}{\bullet} & \textcolor{white}{\bullet} & \textcolor{white}{\bullet} \\
	\textcolor{white}{\bullet} & \textcolor{white}{\bullet} & \textcolor{white}{\bullet} & \dots & \textcolor{white}{\bullet} & \bullet & \textcolor{white}{\bullet} & \dots & \textcolor{white}{\bullet} \\
	\textcolor{white}{\bullet} & \textcolor{white}{\bullet} & \dots & \textcolor{white}{\bullet} & \bullet & \textcolor{white}{\bullet} & \textcolor{rgb,255:red,51;green,65;blue,255}{\star} & \textcolor{white}{\bullet} & \dots \\
	\textcolor{white}{\bullet} & \dots & \textcolor{white}{\bullet} & \bullet & \textcolor{white}{\bullet} & \textcolor{rgb,255:red,51;green,65;blue,255}{\star} & \textcolor{white}{\bullet} & \textcolor{rgb,255:red,51;green,65;blue,255}{\star} & \textcolor{white}{\bullet} \\
	\bullet & \textcolor{white}{\bullet} & \bullet & \textcolor{white}{\bullet} & \textcolor{rgb,255:red,51;green,65;blue,255}{\star} & \textcolor{white}{\bullet} & \textcolor{rgb,255:red,51;green,65;blue,255}{\star} & \textcolor{white}{\bullet} & \dots \\
	\textcolor{white}{\bullet} & \bullet & \textcolor{white}{\bullet} & \textcolor{rgb,255:red,51;green,65;blue,255}{\star} & \textcolor{white}{\bullet} & \textcolor{rgb,255:red,51;green,65;blue,255}{\star} & \textcolor{white}{\bullet} & \dots & \textcolor{white}{\bullet} \\
	\textcolor{white}{\bullet} & \textcolor{white}{\bullet} & \dots & \textcolor{white}{\bullet} & \dots & \textcolor{white}{\bullet} & \dots & \textcolor{white}{\bullet} & \textcolor{white}{\bullet}
	\arrow["{a_{i+4}}"', color={rgb,255:red,51;green,65;blue,255}, squiggly, from=7-3, to=6-4]
	\arrow["{a_{i+3}}"', color={rgb,255:red,51;green,65;blue,255}, squiggly, from=6-4, to=5-5]
	\arrow["{a_{i+2}}"', color={rgb,255:red,51;green,65;blue,255}, squiggly, from=5-5, to=4-6]
	\arrow["{a_{i+1}}"', color={rgb,255:red,51;green,65;blue,255}, squiggly, from=4-6, to=3-7]
	\arrow[draw={rgb,255:red,51;green,65;blue,255}, squiggly, from=5-5, to=6-6]
	\arrow[draw={rgb,255:red,51;green,65;blue,255}, squiggly, from=4-6, to=5-7]
	\arrow[draw={rgb,255:red,51;green,65;blue,255}, from=3-7, to=4-8]
	\arrow["{b_{i+3}}", from=5-3, to=6-4]
	\arrow["{b_{i+2}}", from=4-4, to=5-5]
	\arrow["{b_{i+1}}", from=3-5, to=4-6]
	\arrow["{\tau a_{i+2}}"', from=5-3, to=4-4]
	\arrow["{\tau a_{i+1}}"', from=4-4, to=3-5]
	\arrow["{\tau b_{i+1}}", from=3-3, to=4-4]
	\arrow["{\tau b_{i}}", from=2-4, to=3-5]
	\arrow["{a_i}"', color={rgb,255:red,51;green,65;blue,255}, squiggly, from=3-7, to=2-8]
	\arrow[draw={rgb,255:red,51;green,65;blue,255}, squiggly, from=4-8, to=3-9]
	\arrow[draw={rgb,255:red,51;green,65;blue,255}, squiggly, from=4-8, to=5-9]
	\arrow[draw={rgb,255:red,51;green,65;blue,255}, squiggly, from=5-7, to=6-8]
	\arrow[draw={rgb,255:red,51;green,65;blue,255}, squiggly, from=6-6, to=7-7]
	\arrow[draw={rgb,255:red,51;green,65;blue,255}, squiggly, from=6-4, to=7-5]
	\arrow[draw={rgb,255:red,51;green,65;blue,255}, squiggly, from=7-5, to=6-6]
	\arrow[draw={rgb,255:red,51;green,65;blue,255}, squiggly, from=6-6, to=5-7]
	\arrow[draw={rgb,255:red,51;green,65;blue,255}, squiggly, from=5-7, to=4-8]
	\arrow["{b_{i}}", from=2-6, to=3-7]
	\arrow["{\tau a_{i}}"', from=3-5, to=2-6]
	\arrow["{\tau b_{i+2}}", from=4-2, to=5-3]
	\arrow[from=4-2, to=3-3]
	\arrow[from=3-3, to=2-4]
	\arrow[from=2-4, to=1-5]
	\arrow[from=1-5, to=2-6]
	\arrow["{b_{i+4}}", from=6-2, to=7-3]
	\arrow["{\tau b_{i+3}}", from=5-1, to=6-2]
	\arrow[from=5-1, to=4-2]
	\arrow["{\tau a_{i+3}}"', from=6-2, to=5-3]
\end{tikzcd}\]
\caption{A region of arrows perpendicular and parallel to the pyramid wall}
\label{fig:perp&par}
\end{figure}

The following lemma is key to identifying the cokernel of an arrow perpendicular to the pyramid wall.
We remark that for equioriented type $\mathbb{D}$ quivers (i.e. all arrows oriented towards vertex $n$), there is no arrow $b_4$, so Lemma \ref{lem:triplemeshcoker} is vacuously true due to our notation convention, though its dual would be applied to deal with the region to the right of the pyramid.

\begin{lemma}\label{lem:triplemeshcoker}
Consider the arrow $b_4$ defined in Notation \ref{not:bi} and the neighboring region of $AR(Q)$ shown below.
\begin{equation}\label{eq:triplemeshes}
% https://q.uiver.app/?q=WzAsOSxbMCwxLCJWIl0sWzEsMiwiV18zIl0sWzIsMSwiWCJdLFsxLDEsIldfMiJdLFsxLDAsIldfMSJdLFszLDAsIllfMSJdLFszLDEsIllfMiJdLFszLDIsIllfMyJdLFs0LDEsIloiXSxbMCwxLCJiXzQiLDJdLFsxLDJdLFswLDNdLFszLDJdLFswLDRdLFs0LDJdLFsyLDVdLFsyLDZdLFsyLDddLFs3LDgsImJfNCciLDJdLFs2LDhdLFs1LDhdXQ==
\begin{tikzcd}
	& {W_1} && {Y_1} \\
	V & {W_2} & X & {Y_2} & Z \\
	& {W_3} && {Y_3}
	\arrow["{b_4}"', from=2-1, to=3-2]
	\arrow[from=3-2, to=2-3]
	\arrow[from=2-1, to=2-2]
	\arrow[from=2-2, to=2-3]
	\arrow[from=2-1, to=1-2]
	\arrow[from=1-2, to=2-3]
	\arrow[from=2-3, to=1-4]
	\arrow[from=2-3, to=2-4]
	\arrow[from=2-3, to=3-4]
	\arrow["{b_4'}"', from=3-4, to=2-5]
	\arrow[from=2-4, to=2-5]
	\arrow[from=1-4, to=2-5]
\end{tikzcd}
\end{equation}
We have that $\coker b_4 \simeq \coker b_4' \simeq I_n$.
\end{lemma}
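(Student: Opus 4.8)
The plan is to reduce the claim to an identity of dimension vectors. Both $b_4$ and $b_4'$ are irreducible, so each is either a monomorphism or an epimorphism; I will find that the relevant dimension-vector differences are nonzero and effective, which rules out the epimorphism case (an epimorphism $A \to B$ forces $\ddim B - \ddim A \leq 0$) and so shows both arrows are monomorphisms. Their cokernels are then indecomposable by \cite[Corollary~5.2.7]{DWbook}, and since indecomposables over a Dynkin quiver are determined by their dimension vectors, it suffices to prove $\ddim W_3 - \ddim V = \ddim I_n$ and $\ddim Z - \ddim Y_3 = \ddim I_n$.

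First I would extract the two mesh relations from \eqref{eq:triplemeshes}. The left triple mesh is the almost split sequence $0 \to V \to W_1 \oplus W_2 \oplus W_3 \to X \to 0$ with $V = \tau X$, which gives $\ddim W_3 - \ddim V = \ddim X - \ddim W_1 - \ddim W_2$; the right triple mesh $0 \to X \to Y_1 \oplus Y_2 \oplus Y_3 \to Z \to 0$ with $Z = \tau^{-1} X$ gives $\ddim Z - \ddim Y_3 = \ddim Y_1 + \ddim Y_2 - \ddim X$. Because vertices $1$ and $2$ are leaves of $Q$, the vertices $W_1, W_2$ lie in the $\tau$-orbits of $P_1, P_2$ and each has $X$ as its \emph{unique} successor in $AR(Q)$, so its forward mesh reads $0 \to W_i \to X \to Y_i \to 0$ and $\ddim Y_i = \ddim X - \ddim W_i$ for $i = 1, 2$. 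Substituting shows the two displayed expressions coincide; hence $\coker b_4 \cong \coker b_4'$ and it remains only to identify the common dimension vector $\ddim X - \ddim W_1 - \ddim W_2$ with $\ddim I_n$.

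The heart of the proof is therefore the identity $\ddim X - \ddim W_1 - \ddim W_2 = \ddim I_n$. To locate $X$ concretely I would use that the pyramid consists of exactly the indecomposables $M$ with $M_n \neq 0$ (both path conditions defining the pyramid are equivalent to $M_n \neq 0$, since $\Hom_Q(P_n, M) \cong M_n$ and $\Hom_Q(M, I_n) \cong M_n^\ast$): the relevant $X$ is then the unique triple-mesh center with $X_n \neq 0$ but $(\tau X)_n = 0$, so that $V = \tau X$ sits just outside the pyramid while $W_3$ lies on its wall. With $X$ pinned down, I would compute $\ddim X$, $\ddim W_1$, $\ddim W_2$ by knitting outward from the branch point using the concrete form of the projectives, and compare the result with the support of $I_n$, namely the set of vertices admitting a path to $n$ in $Q$.

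The step I expect to be the main obstacle is carrying out this last identity \emph{uniformly}, since the representations $V, W_1, W_2, W_3, X$ and even the position of the relevant triple mesh depend on the orientation of the arm $3 - 4 - \cdots - n$; indeed, for the arm oriented entirely toward $n$ the arrow $b_4$ does not exist, so the statement is vacuous there. I would handle this by treating the arm's orientation as the input to the mesh recursion, reducing the whole lemma to a single dimension-vector bookkeeping at the branch point, after which the effectiveness of the answer $\ddim I_n$ simultaneously delivers the monomorphism claims and the identification of both cokernels with $I_n$.
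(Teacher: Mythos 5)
Your reduction and your mesh computation coincide with the first half of the paper's proof: the chain $\ddim W_3-\ddim V=\ddim X-\ddim W_1-\ddim W_2=\ddim Y_1+\ddim Y_2-\ddim X=\ddim Z-\ddim Y_3$, justified by the fact that $W_1,W_2$ lie in the $\tau$-orbits of the two leaves and hence have $X$ as unique successor, is exactly \eqref{eq:Dcokercalc}, and the observation that effectiveness of the common answer rules out the epimorphism alternative is also how the paper deduces injectivity. The genuine gap is in your final step. The identity $\ddim X-\ddim W_1-\ddim W_2=\ddim I_n$ carries essentially all of the content of the lemma, and you do not prove it: you only outline a plan (``knitting outward from the branch point,'' ``a single dimension-vector bookkeeping at the branch point'') and you yourself flag it as the main obstacle. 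Carried out as described, that computation requires first locating the relevant triple mesh and then running an orientation-dependent recursion along the long arm, which is precisely the case analysis one wants to avoid. The paper's proof closes this gap structurally rather than computationally: $b_4'$ is the top rung of the (dual) ladder descending the \emph{right} wall of the pyramid, so the dual of Lemma \ref{lem:ladder} identifies $\coker b_4'$ with the vertex at the bottom of that wall, which is $I_n$ by the definition of the pyramid. No dimension vector of any individual module needs to be computed. You should either adopt that argument or actually supply the omitted computation in full generality.

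A secondary problem with your proposed route is the claim that the pyramid consists exactly of the indecomposables $M$ with $M_n\neq 0$, justified by $\Hom_Q(P_n,M)\cong M_n$ and $\Hom_Q(M,I_n)\cong M_n^{*}$. For a directed algebra, nonvanishing of $\Hom$ does imply the existence of a path in $\AR(Q)$, but the converse fails: composites of irreducible maps along a path can be zero (already for $A_2$ there is a path from a simple $S$ to $I_n$ with $S_n=0$), so neither path condition defining the pyramid is, on its face, equivalent to $M_n\neq 0$. The set-theoretic characterization of the pyramid may still be correct, but it would itself require proof before you could use it to pin down $X$, which adds another unproved step to the part of your argument that is already incomplete.
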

\begin{proof}
Since $b_4$ and $b_4'$ are irreducible morphisms, each is either injective with indecomposable cokernel, or surjective with indecomposable kernel (see \S\ref{sec:AR}).  It follows from the computation below that they are both injective. For a Dynkin quiver, indecomposables are determined by their dimension vectors, so it is enough to show that
$\coker b_4$ and $\coker b_4'$ have the same dimension vector.  This is illustrated by the following computation which uses basic properties of exact sequences in Auslander-Reiten quivers.  At each step, we simply replace terms more towards the left in the Auslander-Reiten quiver with terms appearing more towards the right.
\begin{equation}\label{eq:Dcokercalc}
    \begin{split}
    \ddim \coker b_4 &= \ddim W_3 - \ddim V = \ddim X - \ddim W_1 - \ddim W_2\\
    &= \ddim Y_1 + \ddim Y_2 - \ddim X = \ddim Z - \ddim Y_3 =\ddim \coker b_4'.
    \end{split}
\end{equation}
Now translating down the right wall of the pyramid using the dual version of Lemma \ref{lem:ladder}, we find that $\coker b'_4 \simeq I_n$.
\end{proof}

\begin{lemma}\label{lem:pyramidperp}
The arrow $b_i$ is positive for $4 \leq i \leq n$. \end{lemma}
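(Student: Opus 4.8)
The plan is to pin down the cokernel of every $b_i$ and then feed the resulting short exact sequence into the see-saw property, using that all pyramid arrows are already known to be positive. Write $v_i$ for the target of $b_i$, which lies on the left wall in the $\tau$-orbit of $P_i$, and $S_i$ for its source. The first step is to record the local picture: for $4 \le i \le n-1$ the almost split sequence ending at $v_i$ is a diamond mesh
\[
0 \to \tau v_i \to S_i \oplus v_{i+1} \to v_i \to 0,
\]
whose arrows out of $\tau v_i$ are $\tau a_i\colon \tau v_i \to S_i$ and the perpendicular arrow $b_{i+1}\colon \tau v_i \to v_{i+1}$ (so that $\tau v_i = S_{i+1}$), and whose arrows into $v_i$ are $b_i\colon S_i \to v_i$ and the wall arrow $a_{i+1}\colon v_{i+1}\to v_i$, exactly as drawn in Figure \ref{fig:perp&par}.

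Next I would compute $\coker b_i$ uniformly. Taking dimension vectors in the mesh gives $\ddim v_i - \ddim S_i = \ddim v_{i+1} - \ddim \tau v_i = \ddim v_{i+1} - \ddim S_{i+1}$, so the vector $d_i := \ddim v_i - \ddim S_i$ does not depend on $i$. By Lemma \ref{lem:triplemeshcoker}, $b_4$ is injective with $\coker b_4 = I_n$, hence $d_4 = \ddim I_n$, which has all entries nonnegative and is nonzero. Since each $b_i$ is irreducible, it is either injective with indecomposable cokernel or surjective with indecomposable kernel \cite[Cor.~5.2.7]{DWbook}; the surjective case would force $d_i = -\ddim \ker b_i \le 0$ componentwise, contradicting $d_i = \ddim I_n$. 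Therefore every $b_i$ is injective, and as indecomposables over a Dynkin quiver are determined by their dimension vector, $\coker b_i = I_n$ for all $4 \le i \le n$.

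Finally I would conclude positivity. For each $i$ there is a short exact sequence $0 \to S_i \xto{b_i} v_i \to I_n \to 0$. Since $v_i$ lies in the pyramid, the defining path from $v_i$ to $I_n$ stays entirely in the pyramid (any intermediate vertex inherits a path from $P_n$ and a path to $I_n$), so every arrow along it is positive by Lemma \ref{lem:typeDpyramid}; as $v_i \ne I_n$ the path is nonempty and yields $\mu(v_i) < \mu(I_n)$. The see-saw property \eqref{eq:seesaw} then forces $\mu(S_i) < \mu(v_i) < \mu(I_n)$, i.e. the sequence is positive, so $b_i$ is positive in the sense of Definition \ref{def:positive}.

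The main obstacle I anticipate is the bookkeeping in the first step: one must verify that for every relevant $i$ the mesh at $v_i$ genuinely has the claimed diamond shape with sides $b_i$, $b_{i+1}$, $a_{i+1}$, $\tau a_i$ and left vertex $\tau v_i = S_{i+1}$, so that the dimension-vector recursion links $b_i$ to $b_{i+1}$ without a break. This requires checking that the unique triple mesh, which sits in the $\tau$-orbit of $P_3$ at the apex \emph{above} $v_4$, never occurs at any $v_i$ in this range, so that Lemma \ref{lem:triplemeshcoker} serves cleanly as the base case of the recursion. Once this local structure is confirmed, the remainder follows immediately from Lemmas \ref{lem:triplemeshcoker} and \ref{lem:typeDpyramid} together with the see-saw property.
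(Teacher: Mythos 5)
Your proof is correct and follows essentially the same route as the paper: establish $\coker b_i = \coker b_4 = I_n$ (via Lemma \ref{lem:triplemeshcoker}), then run a positive path inside the pyramid from the target of $b_i$ to $I_n$ and invoke the see-saw property. The only cosmetic difference is that you re-derive the identification of the cokernels by an explicit mesh/dimension-vector recursion, whereas the paper obtains the same fact by citing (the dual of) Lemma \ref{lem:ladder}, whose proof is exactly that recursion.
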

\begin{proof}
%We first consider the base case $i=4$.  
By Lemma \ref{lem:triplemeshcoker} and the fact that 
$\coker b_i \simeq \coker b_4$ for all $i$ by Lemma \ref{lem:ladder},
we get a short exact sequence
\begin{equation}\label{eq:pyramidperp}
    0 \to A_i \xto{b_i} B_i \to I_n \to 0.
\end{equation}
Since $B_i$ is on the left wall of the pyramid and $I_n$ is on the right wall of the pyramid, there is a path inside the pyramid from $B_i$ to $I_n$.
This path is positive by Lemma \ref{lem:typeDpyramid}.  Thus $\mu(B_i) < \mu(I_n)$, implying $\mu(A_i) < \mu(B_i)$ by the see-saw property, so $b_i$ is positive.
\end{proof}

\begin{figure}
\[\begin{tikzcd}
	X & \textcolor{white}{\bullet} & W & \textcolor{white}{\bullet} & \textcolor{white}{\bullet} & \textcolor{white}{\bullet} & \bullet & \textcolor{white}{\bullet} & \dots \\
	\textcolor{white}{\bullet} & Y & \textcolor{white}{\bullet} & \textcolor{white}{\bullet} & \textcolor{white}{\bullet} & \bullet & \textcolor{white}{\bullet} & \textcolor{rgb,255:red,51;green,65;blue,255}{\star} & \textcolor{white}{\bullet} \\
	\textcolor{white}{\bullet} & \textcolor{white}{\bullet} & \bullet & \textcolor{white}{\bullet} & \textcolor{white}{\bullet} & \textcolor{white}{\bullet} & \textcolor{rgb,255:red,51;green,65;blue,255}{\star} & \textcolor{white}{\bullet} & \textcolor{white}{\bullet} \\
	\textcolor{white}{\bullet} & \textcolor{white}{\bullet} && \ddots & \textcolor{white}{\bullet} & \dots & \textcolor{white}{\bullet} & \textcolor{white}{\bullet} & \textcolor{white}{\bullet} \\
	\textcolor{white}{\bullet} & \textcolor{white}{\bullet} & \textcolor{white}{\bullet} & \textcolor{white}{\bullet} & \textcolor{rgb,255:red,51;green,65;blue,255}{\star} & \textcolor{white}{\bullet} & \textcolor{white}{\bullet} & \textcolor{white}{\bullet} & \textcolor{white}{\bullet}
	\arrow["{a_{i+1}}"', color={rgb,255:red,51;green,65;blue,255}, squiggly, from=3-7, to=2-8]
	\arrow["{b_{i+1}}", from=2-6, to=3-7]
	\arrow["{b_i}", from=1-7, to=2-8]
	\arrow["{\tau^k b_i}"', from=1-1, to=2-2]
	\arrow["{\tau^{k-1}b_{i+1}}"', from=2-2, to=3-3]
	\arrow["{\tau^{k-2}b_{i+2}}"', from=3-3, to=4-4]
	\arrow["{b_{i+k}}"', from=4-4, to=5-5]
	\arrow["{a_{i+k}}"', color={rgb,255:red,51;green,65;blue,255}, squiggly, from=5-5, to=4-6]
	\arrow["{a_{i+2}}"', color={rgb,255:red,51;green,65;blue,255}, squiggly, from=4-6, to=3-7]
	\arrow["{\tau(a_i)}", from=2-6, to=1-7]
	\arrow["{a_i}"', color={rgb,255:red,51;green,65;blue,255}, squiggly, from=2-8, to=1-9]
	\arrow["{\tau^ka_i}", from=2-2, to=1-3]
\end{tikzcd}\]
\caption{Visual aid for proofs of Lemmas \ref{lem:typeDperp} and \ref{lem:typeDparallel}}\label{fig:parallelproof}
\end{figure}
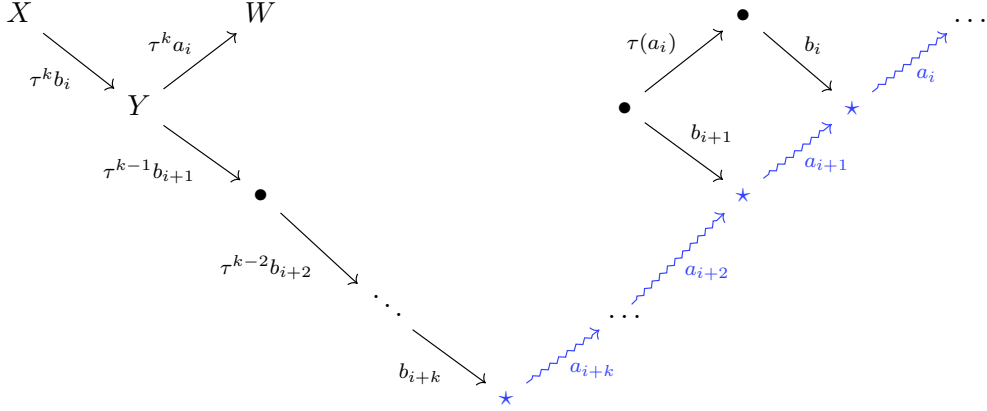

\begin{lemma}\label{lem:typeDperp}
All arrows $\tau^k b_i$ (i.e. those perpendicular to the left wall) are positive. 
\end{lemma}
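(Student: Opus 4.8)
The plan is to induct on the translate-index $k$, the base case $k=0$ being exactly Lemma~\ref{lem:pyramidperp}. Write $b_i\colon A_i\to B_i$, so $A_i$ is the source of $b_i$ (just off the left wall) and $B_i$ its target on the wall. Assume as inductive hypothesis that every perpendicular arrow $\tau^{k'}b_j$ with $k'<k$ is positive; I want to show $\tau^k b_i$ is positive. The first step is to obtain a short exact sequence
\[
0 \to \tau^k A_i \xto{\;\tau^k b_i\;} \tau^k B_i \to \tau^k I_n \to 0
\]
by repeating, in the $\tau^k$-translate of the relevant region, the ladder and triple-mesh computations of Lemmas~\ref{lem:triplemeshcoker} and~\ref{lem:pyramidperp}; equivalently, since all terms are nonprojective in the admissible range of $k$, one translates the sequence $0\to A_i\to B_i\to I_n\to 0$ and matches dimension vectors, noting that the difference $\ddim\tau^k B_i-\ddim\tau^k A_i=\ddim\tau^k I_n$ is a positive vector (so $\tau^k b_i$ is a monomorphism) and that indecomposables over a Dynkin quiver are determined by their dimension vectors.

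By the see-saw property it suffices to prove $\mu(\tau^k B_i)<\mu(\tau^k I_n)$, as this forces $\mu(\tau^k A_i)<\mu(\tau^k B_i)$, i.e.\ positivity of $\tau^k b_i$. I would produce a positive path from $\tau^k B_i$ to $\tau^k I_n$ in two stages. Using the wall identity $A_{j+1}=\tau B_j$ (visible in Figure~\ref{fig:perp&par}), we get $\tau^{k-1}A_{i+1}=\tau^k B_i$, so $\tau^{k-1}b_{i+1}$ emanates from $\tau^k B_i$; iterating gives a path
\[
\tau^k B_i \xto{\tau^{k-1}b_{i+1}} \tau^{k-1}B_{i+1} \xto{\tau^{k-2}b_{i+2}} \cdots \xto{b_{i+k}} B_{i+k}
\]
all of whose arrows have translate-index strictly less than $k$, hence are positive by the inductive hypothesis, and whose terminal vertex $B_{i+k}$ lies on the wall, hence in the pyramid. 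For the second stage, recall that $I_n$ lies in the $\tau$-orbit of $P_n$ (the bottom row of $\AR(Q)$); therefore $\tau^k I_n$ also lies in the bottom row, strictly between $P_n$ and $I_n$ for the admissible $k$, and so lies in the pyramid. I then join $B_{i+k}$ to $\tau^k I_n$ by a path descending to the bottom row and running rightward, which is positive by Lemma~\ref{lem:typeDpyramid}. Concatenating the two stages gives the required positive path.

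The region to the right of the pyramid follows by the duality $\AR(Q^{\op})=\AR(Q)^{\op}$. The step needing the most care---and the expected main obstacle---is the simultaneous bookkeeping of the two $\tau$-orbits: one must verify that, for precisely those $k$ for which $\tau^k b_i$ exists, the translate $\tau^k I_n$ is still nonzero, nonprojective, and inside the pyramid, so that both the cokernel identification and the concluding pyramid path remain valid. This alignment of the translated wall with the $\tau$-orbit of $I_n$ is where the facts that $\dim\Hom_Q(P_n,I_n)=1$ and that the pyramid contains a single triple mesh are used; granting it, the index ranges for $i$ and the existence of the auxiliary arrows $b_{i+1},\dots,b_{i+k}$ are routine.
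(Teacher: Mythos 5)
Your proof is correct and takes essentially the same route as the paper's: induction on $k$ with Lemma \ref{lem:pyramidperp} as the base case, identification of the cokernel of $\tau^k b_i$ as $\tau^k I_n$, a positive path from $\tau^k B_i$ to the pyramid wall through the lower-index perpendicular arrows $\tau^{k-1}b_{i+1},\dotsc,b_{i+k}$, and a concluding positive path inside the pyramid combined with the see-saw property. Your additional bookkeeping about the location of $\tau^k I_n$ and the cokernel computation only makes explicit what the paper leaves implicit.
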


\begin{proof}
We proceed by induction on the power of $\tau$. We have already proven the statement for the arrows $b_i$ in Lemma \ref{lem:pyramidperp}, so the base case holds.
Suppose we know that arrows of the form $\tau^{j}b_i$ are positive for all $0\leq j\leq k-1$. We show $\tau^{k}b_i$ is positive.

Since $\coker b_i \simeq I_n$ from \eqref{eq:pyramidperp}, we apply $\tau^k$ to get a short exact sequence
\begin{equation}
    0 \to X \xto{\tau^k b_i} Y \to \tau^k I_n \to 0.
\end{equation}
By the induction hypothesis, we have a positive path of arrows (see Figure \ref{fig:parallelproof}) from $Y$ to the wall of the pyramid given by  
\begin{equation}\label{eq:perppath}
(\tau^{k-1}b_{i+1})\ (\tau^{k-2}b_{i+2}) \cdots b_{i+k}.
\end{equation}
Letting $Z$ denote the target of $b_{i+k}$, this gives $\mu(Y) < \mu(Z)$.

%Now the target of $b_{i+k}$ is by definition the source or $a_{i+k}$ and thus in the $\tau$-orbit of $P_{i+k}$.
By unraveling the notation and definitions of the pyramid, continuing the path from the target of $b_{i+k}$ directly down and the right we eventually end up at $\tau^{i+k-4} I_n$.  Since $i \geq 4$ by definition of the arrows $b_i$, there is furthermore a path from $\tau^{i+k-4} I_n$ to $\tau^k I_n$.  Now we have a path from $Z$ to $\tau^k I_n$ contained entirely within the pyramid, so by 
Lemma \ref{lem:typeDpyramid} we get $\mu(Z)<\mu(\tau^k I_n)$ and thus $\mu(Y)<\mu(\tau^k I_n)$ and eventually $\mu(X)<\mu(Y)$ by the see-saw property, meaning $\tau^k b_i$ is positive.
\end{proof}

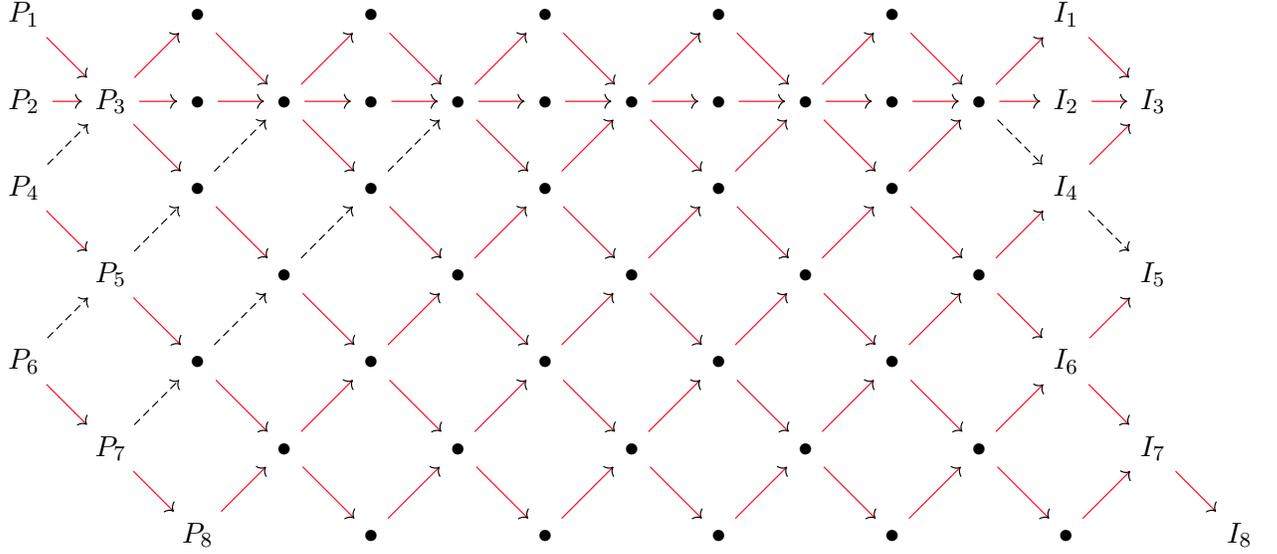
\begin{figure}[h]\label{fig:typeDAR4}
\[\begin{tikzcd}[column sep={3em,between origins},row sep={3em,between origins}]
	{P_1} & \textcolor{white}{\bullet} & \bullet && \bullet && \bullet && \bullet && \bullet && {I_1} &&&& \textcolor{white}{\bullet} & \textcolor{white}{\bullet} \\
	{P_2} & {P_3} & \bullet & \bullet & \bullet & \bullet & \bullet & \bullet & \bullet & \bullet & \bullet & \bullet & {I_2} & {I_3} &&& \textcolor{white}{\bullet} & \textcolor{white}{\bullet} \\
	{P_4} & \textcolor{white}{\bullet} & \bullet && \bullet & {} & \bullet && \bullet && \bullet && {I_4} &&&& \textcolor{white}{\bullet} & \textcolor{white}{\bullet} \\
	{} & {P_5} & \textcolor{white}{\bullet} & \bullet && \bullet && \bullet && \bullet && \bullet & \textcolor{white}{\bullet} & {I_5} &&& \textcolor{white}{\bullet} & \textcolor{white}{\bullet} \\
	{P_6} & \textcolor{white}{\bullet} & \bullet && \bullet && \bullet && \bullet && \bullet && {I_6} &&&& \textcolor{white}{\bullet} & \textcolor{white}{\bullet} \\
	& {P_7} & \textcolor{white}{\bullet} & \bullet && \bullet && \bullet && \bullet && \bullet & \textcolor{white}{\bullet} & {I_7} &&& \textcolor{white}{\bullet} & \textcolor{white}{\bullet} \\
	& \textcolor{white}{\bullet} & {P_8} && \bullet && \bullet && \bullet && \bullet && \bullet && {I_8} && \textcolor{white}{\bullet}
	\arrow[draw={rgb,255:red,255;green,5;blue,26}, from=1-3, to=2-4]
	\arrow[draw={rgb,255:red,255;green,5;blue,26}, from=2-3, to=2-4]
	\arrow[dashed, from=3-3, to=2-4]
	\arrow[draw={rgb,255:red,255;green,5;blue,26}, from=3-3, to=4-4]
	\arrow[dashed, from=5-3, to=4-4]
	\arrow[draw={rgb,255:red,255;green,5;blue,26}, from=5-3, to=6-4]
	\arrow[draw={rgb,255:red,255;green,5;blue,26}, from=7-3, to=6-4]
	\arrow[draw={rgb,255:red,255;green,5;blue,26}, from=6-4, to=5-5]
	\arrow[draw={rgb,255:red,255;green,5;blue,26}, from=5-5, to=4-6]
	\arrow[draw={rgb,255:red,255;green,5;blue,26}, from=4-6, to=3-7]
	\arrow[draw={rgb,255:red,255;green,5;blue,26}, from=3-7, to=2-8]
	\arrow[dashed, from=4-4, to=3-5]
	\arrow[draw={rgb,255:red,255;green,5;blue,26}, from=3-5, to=4-6]
	\arrow[draw={rgb,255:red,255;green,5;blue,26}, from=4-4, to=5-5]
	\arrow[draw={rgb,255:red,255;green,5;blue,26}, from=2-4, to=3-5]
	\arrow[draw={rgb,255:red,255;green,5;blue,26}, from=2-4, to=2-5]
	\arrow[draw={rgb,255:red,255;green,5;blue,26}, from=2-4, to=1-5]
	\arrow[draw={rgb,255:red,255;green,5;blue,26}, from=1-5, to=2-6]
	\arrow[draw={rgb,255:red,255;green,5;blue,26}, from=2-5, to=2-6]
	\arrow[dashed, from=3-5, to=2-6]
	\arrow[draw={rgb,255:red,255;green,5;blue,26}, from=2-6, to=3-7]
	\arrow[draw={rgb,255:red,255;green,5;blue,26}, from=2-6, to=2-7]
	\arrow[draw={rgb,255:red,255;green,5;blue,26}, from=2-6, to=1-7]
	\arrow[draw={rgb,255:red,255;green,5;blue,26}, from=1-7, to=2-8]
	\arrow[draw={rgb,255:red,255;green,5;blue,26}, from=2-7, to=2-8]
	\arrow[draw={rgb,255:red,255;green,5;blue,26}, from=2-8, to=2-9]
	\arrow[draw={rgb,255:red,255;green,5;blue,26}, from=2-8, to=1-9]
	\arrow[draw={rgb,255:red,255;green,5;blue,26}, from=1-11, to=2-12]
	\arrow[draw={rgb,255:red,255;green,5;blue,26}, from=2-11, to=2-12]
	\arrow[draw={rgb,255:red,255;green,5;blue,26}, from=6-4, to=7-5]
	\arrow[draw={rgb,255:red,255;green,5;blue,26}, from=2-2, to=1-3]
	\arrow[draw={rgb,255:red,255;green,5;blue,26}, from=2-2, to=2-3]
	\arrow[draw={rgb,255:red,255;green,5;blue,26}, from=2-2, to=3-3]
	\arrow[dashed, from=4-2, to=3-3]
	\arrow[dashed, from=6-2, to=5-3]
	\arrow[draw={rgb,255:red,255;green,5;blue,26}, from=4-2, to=5-3]
	\arrow[draw={rgb,255:red,255;green,5;blue,26}, from=6-2, to=7-3]
	\arrow[draw={rgb,255:red,255;green,5;blue,26}, from=1-1, to=2-2]
	\arrow[draw={rgb,255:red,255;green,5;blue,26}, from=2-1, to=2-2]
	\arrow[dashed, from=3-1, to=2-2]
	\arrow[draw={rgb,255:red,255;green,5;blue,26}, from=3-1, to=4-2]
	\arrow[dashed, from=5-1, to=4-2]
	\arrow[draw={rgb,255:red,255;green,5;blue,26}, from=5-1, to=6-2]
	\arrow[draw={rgb,255:red,255;green,5;blue,26}, from=2-12, to=1-13]
	\arrow[draw={rgb,255:red,255;green,5;blue,26}, from=2-12, to=2-13]
	\arrow[dashed, from=2-12, to=3-13]
	\arrow[draw={rgb,255:red,255;green,5;blue,26}, from=3-13, to=2-14]
	\arrow[draw={rgb,255:red,255;green,5;blue,26}, from=2-13, to=2-14]
	\arrow[draw={rgb,255:red,255;green,5;blue,26}, from=1-13, to=2-14]
	\arrow[dashed, from=3-13, to=4-14]
	\arrow[draw={rgb,255:red,255;green,5;blue,26}, from=7-5, to=6-6]
	\arrow[draw={rgb,255:red,255;green,5;blue,26}, from=6-6, to=5-7]
	\arrow[draw={rgb,255:red,255;green,5;blue,26}, from=5-7, to=4-8]
	\arrow[draw={rgb,255:red,255;green,5;blue,26}, from=4-8, to=3-9]
	\arrow[draw={rgb,255:red,255;green,5;blue,26}, from=3-9, to=2-10]
	\arrow[draw={rgb,255:red,255;green,5;blue,26}, from=2-8, to=3-9]
	\arrow[draw={rgb,255:red,255;green,5;blue,26}, from=2-9, to=2-10]
	\arrow[draw={rgb,255:red,255;green,5;blue,26}, from=1-9, to=2-10]
	\arrow[draw={rgb,255:red,255;green,5;blue,26}, from=3-7, to=4-8]
	\arrow[draw={rgb,255:red,255;green,5;blue,26}, from=4-6, to=5-7]
	\arrow[draw={rgb,255:red,255;green,5;blue,26}, from=5-5, to=6-6]
	\arrow[draw={rgb,255:red,255;green,5;blue,26}, from=2-10, to=2-11]
	\arrow[draw={rgb,255:red,255;green,5;blue,26}, from=2-10, to=1-11]
	\arrow[draw={rgb,255:red,255;green,5;blue,26}, from=6-6, to=7-7]
	\arrow[draw={rgb,255:red,255;green,5;blue,26}, from=5-7, to=6-8]
	\arrow[draw={rgb,255:red,255;green,5;blue,26}, from=4-8, to=5-9]
	\arrow[draw={rgb,255:red,255;green,5;blue,26}, from=3-9, to=4-10]
	\arrow[draw={rgb,255:red,255;green,5;blue,26}, from=2-10, to=3-11]
	\arrow[draw={rgb,255:red,255;green,5;blue,26}, from=3-11, to=2-12]
	\arrow[draw={rgb,255:red,255;green,5;blue,26}, from=6-8, to=7-9]
	\arrow[draw={rgb,255:red,255;green,5;blue,26}, from=7-7, to=6-8]
	\arrow[draw={rgb,255:red,255;green,5;blue,26}, from=6-8, to=5-9]
	\arrow[draw={rgb,255:red,255;green,5;blue,26}, from=5-9, to=6-10]
	\arrow[draw={rgb,255:red,255;green,5;blue,26}, from=7-9, to=6-10]
	\arrow[draw={rgb,255:red,255;green,5;blue,26}, from=5-9, to=4-10]
	\arrow[draw={rgb,255:red,255;green,5;blue,26}, from=4-10, to=3-11]
	\arrow[draw={rgb,255:red,255;green,5;blue,26}, from=4-12, to=3-13]
	\arrow[draw={rgb,255:red,255;green,5;blue,26}, from=5-11, to=4-12]
	\arrow[draw={rgb,255:red,255;green,5;blue,26}, from=4-10, to=5-11]
	\arrow[draw={rgb,255:red,255;green,5;blue,26}, from=6-10, to=5-11]
	\arrow[draw={rgb,255:red,255;green,5;blue,26}, from=6-10, to=7-11]
	\arrow[draw={rgb,255:red,255;green,5;blue,26}, from=5-11, to=6-12]
	\arrow[draw={rgb,255:red,255;green,5;blue,26}, from=7-11, to=6-12]
	\arrow[draw={rgb,255:red,255;green,5;blue,26}, from=6-12, to=5-13]
	\arrow[draw={rgb,255:red,255;green,5;blue,26}, from=4-12, to=5-13]
	\arrow[draw={rgb,255:red,255;green,5;blue,26}, from=3-11, to=4-12]
	\arrow[draw={rgb,255:red,255;green,5;blue,26}, from=5-13, to=4-14]
	\arrow[draw={rgb,255:red,255;green,5;blue,26}, from=6-12, to=7-13]
	\arrow[draw={rgb,255:red,255;green,5;blue,26}, from=7-13, to=6-14]
	\arrow[draw={rgb,255:red,255;green,5;blue,26}, from=5-13, to=6-14]
	\arrow[draw={rgb,255:red,255;green,5;blue,26}, from=6-14, to=7-15]
\end{tikzcd}\]
\caption{The solid red arrows are positive by results through Lemma \ref{lem:typeDperp}.}
\end{figure}

\begin{lemma}\label{lem:typeDparallel}
All arrows $\tau^k a_i$ (i.e. those parallel to the left wall) are positive.
\end{lemma}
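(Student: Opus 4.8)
The plan is to handle the parallel arrows by the same mechanism used for the perpendicular ones in Lemma~\ref{lem:typeDperp}: attach a short exact sequence to each arrow whose third term sits in the pyramid, route a positive path through arrows already known to be positive, and close with the see-saw property~\eqref{eq:seesaw}. First I would record that each left-wall arrow $a_i$, and hence each translate $\tau^k a_i$, is a monomorphism. The point is that the modules along the left wall have strictly increasing total dimension as one climbs toward the triple mesh at the apex, so these irreducible maps cannot be surjective and are therefore injective with indecomposable cokernel by \cite[Cor.~5.2.7]{DWbook}. Writing $Y$ for the source of $\tau^k a_i$ and $W$ for its target, this produces a short exact sequence
\[
0 \to Y \xto{\tau^k a_i} W \to C \to 0, \qquad C = \coker(\tau^k a_i),
\]
with $C$ indecomposable; by the see-saw property it is then enough to prove $\mu(Y) < \mu(C)$.

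The crux is the claim that $C$ lies in the bottom row of the pyramid, i.e. in the $\tau$-orbit of $P_n$. Granting this, I would connect $Y$ to $C$ by a path of arrows already known to be positive: from $Y$, lying in the $\tau$-orbit of $P_i$, descend along the perpendicular arrows $\tau^{k-1}b_{i+1}, \tau^{k-2}b_{i+2}, \dots$ (positive by Lemma~\ref{lem:typeDperp}) until reaching the bottom row, and then proceed rightward along the bottom row (positive by Lemma~\ref{lem:bottomrow}) until arriving at $C$. A short column count shows that the descent meets the bottom row weakly to the left of $C$, so $C$ is genuinely reached, and every arrow traversed is positive; hence $\mu(Y) < \mu(C)$. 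The see-saw property applied to the sequence above then yields $\mu(Y) < \mu(W)$, so $\tau^k a_i$ is positive. Note that this path uses only perpendicular and bottom-row arrows and no parallel arrow, so in contrast to Lemma~\ref{lem:typeDperp} no induction on $k$ is needed (the wall arrows themselves, $k=0$, are in any case already covered by Lemma~\ref{lem:typeDpyramid}).

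The hard step is pinning down $C$. Unlike the perpendicular situation of Lemma~\ref{lem:triplemeshcoker}, where every cokernel $\coker b_i$ is the single injective $I_n$, here the cokernel is not constant: as $i$ and $k$ range, $C$ marches along the entire bottom row. I would identify it by an Auslander--Reiten dimension-vector computation in the spirit of Lemma~\ref{lem:triplemeshcoker}, sliding $\ddim C$ rightward across successive meshes and translating along the wall via (the dual of) Lemma~\ref{lem:ladder}, until $\ddim C$ is matched with that of a module in the $\tau$-orbit of $P_n$; since indecomposables over a Dynkin quiver are determined by their dimension vectors, this identifies $C$. The one subtlety to keep track of is that the shape of the left wall near the branch point depends on the orientation of $Q$, which changes precisely which translate $\tau^{-m}P_n$ the cokernel equals, but not the decisive fact that $C$ is a bottom-row module reachable from $Y$ by positive arrows.
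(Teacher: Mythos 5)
Your proposal is correct and follows essentially the same route as the paper: attach the cokernel short exact sequence to $\tau^k a_i$, connect its source to the cokernel by a path of arrows already known to be positive, and close with the see-saw property. The paper takes a shortcut you do not: it uses the dual of Lemma~\ref{lem:ladder} to identify $\coker(\tau^k a_i)$ with $\coker a_{i+k}$ and then only needs to know that this cokernel lies \emph{somewhere} in the pyramid, where Lemma~\ref{lem:typeDpyramid} already makes every arrow positive, so your more laborious pinning of the cokernel to a specific translate $\tau^{-m}P_n$ in the bottom row (which is in fact where it lands) is correct but unnecessary. One small slip in your path description: the perpendicular arrows $\tau^{k-1}b_{i+1},\dots,b_{i+k}$ terminate on the pyramid wall in the $\tau$-orbit of $P_{i+k}$, not on the bottom row, so the remaining descent to the $\tau$-orbit of $P_n$ consists of pyramid arrows and your claim that the path uses ``only perpendicular and bottom-row arrows'' is inaccurate when $i+k<n$ --- harmless, since those arrows are positive by Lemma~\ref{lem:typeDpyramid}, but it should be stated.
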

\begin{proof}
Denoting the corresponding short exact sequence by
\begin{equation}
    0 \to Y \xto{\tau^k a_i} W \to V \to 0,
\end{equation}
we will show $\mu(Y) < \mu(V)$ by constructing paths from $Y$ to $s(a_{i+k})$ and from $s(a_{i+k})$ to $V$ which are each positive; again the reader may see Figure \ref{fig:parallelproof}.
%We have a short exact sequence 
%\begin{equation}
 %   0\rightarrow Y\xto{\tau^k a_i} W\rightarrow V\rightarrow 0.
%\end{equation}

Applying Lemma \ref{lem:typeDperp}, the path in \eqref{eq:perppath} from $Y$ to $s(a_{i+k})$ is positive.

Then the dual of Lemma \ref{lem:ladder} and associated dual ladder give a path from $s(a_{i+k})$ to $V\simeq \coker a_{i+k}$ which is entirely contained in the pyramid, thus positive by Lemma \ref{lem:typeDpyramid}.  
Having constructed a positive path from $Y$ to $V$, the see-saw property then allows us to conclude $\mu(Y) < \mu(W)$, so $\tau^k a_i$ is positive.
\end{proof}

At this point, we have shown that a stability function which is positive on all border sequences in the Auslander-Reiten quiver of a type $\mathbb{D}$ quiver is positive on every arrow, thus totally stable using Lemma \ref{lem:stabilityarrows}, completing the proof of Theorem \ref{thm:main} for quivers of type $\mathbb{D}$. 

%\begin{figure}[H]\label{fig:typeDAR5}
%\input{FigCOMPLETE}
%\caption{Completed figure after Lemma \ref{lem:typeDparallel}}
%\end{figure}

%%%%%%%%%%%%%%%%%%%%%%%%%%%%%%%%%%%%%%%%%%%%%%%%%
\section{Proof of main theorem in Type \texorpdfstring{$\mathbb{A}$}{A}}

%1. Fill in upper and lower pyramids with positive arrows as in 3.15

%2.  Replace Lemma 3.21 with something new to get perpendicular arrows positive (since Lemma 3.18 does not make sense here).  This will need to use that the two pyramids intersect at their tops.  

%3.  ultimately, choose either pyramid (doesn't matter) and fill in a layer of perpendiculars, then a layer of parallels, repeat to get everything from that one pyramid.  This is a little different than type D because we can't do all the perpendiculars before doing any parallels.

%\noindent\textbf{Type $\mathbb{A}$ Draft:}
We assume $Q$ is a quiver of type $\mathbb{A}_n$ throughout this section and
$\mu\colon\text{rep}^*(Q)\rightarrow \mathbb{R}$ is a stability function on $Q$ which is
positive on all border sequences. To prove $\mu$ is totally stable, we follow the same
general strategy as the type $\mathbb{D}$ case. That is, we prove that $\mu$ is positive on
each arrow of AR($Q$) by going through the different regions of $\AR(Q)$ in a carefully chosen
order.
We take the convention of drawing $\text{AR}(Q)$ with $P_1$ and $I_n$ at the top
and $P_n$ and $I_1$ at the bottom. 

Just like the type $\mathbb{D}$ case, we will build on pyramids within
$\text{AR}(Q)$, except that now we have two pyramid regions. We introduce the following new
terminology for this section.

\begin{definition}
A vertex $x$ of $\text{AR}(Q)$ is in the \textit{bottom pyramid} (resp. \textit{top
pyramid}) if there exists paths in $\text{AR}(Q)$ from $P_n$ to $x$ and from $x$ to
$I_1$ (resp. paths from $P_1$ to $x$ and from $x$ to $I_n$).
%The \textit{peak} of the pyramids is the unique vertex in the intersection of both pyramids. 
%The \textit{base of the bottom pyramid} consists of the vertices $x$ in $\text{AR}(Q)$ where there is a unique path from $P_n$ to $x$ and a unique path from $x$ to $I_1$. The \textit{base of the top pyramid} is defined in an analogous manner. \textcolor{red}{[Doesn't feel right, I think that will include the peak]}
\end{definition}

We note that if $Q$ is equioriented, say all arrows pointing towards vertex $n$, then the top pyramid is a single vertex and the bottom pyramid is the entirety of $\AR(Q)$.

\begin{lemma}\label{lem:typeAbase}
Each arrow in pyramid regions which has either source or target in the $\tau$-orbit of either $P_1$ or $P_n$ is positive. 
\end{lemma}
\begin{proof}
The proof of this lemma is the same as the proof of Lemma \ref{lem:bottomrow}.
\end{proof}

\begin{lemma}\label{lem:typeApyramid}
Each arrow in either of the pyramids is positive. 
\end{lemma}
\begin{proof}
The proof of this lemma is a straightforward simplification of the proof of Lemma
\ref{lem:typeDpyramid}, since we do not need special
considerations for arrows in the $\tau$-orbits of $P_1,
P_2$ and $P_3$ (i.e. the triple mesh in the type $\mathbb{D}$ pyramid).
\end{proof}

With there being both a top and bottom pyramid, we need a different technique to prove the
positivity of the remaining arrows.   
Roughly speaking, we work outwards from the pyramid walls one layer at a time.
The following key observation is immediate from Lemma \ref{lem:ladder} and the see-saw property: for a ladder as in \eqref{eq:ladder}, if each of the arrows $Z \to X_0 \to \cdots \to X_{k-1} \to X_k$ is positive, then $a$ is positive.  Using this and the corresponding dual version, we simply work outwards from the pyramids one ``layer'' at a time to complete the proof.

\begin{proposition}
Each arrow  in $\AR(Q)$ which is not in the pyramids is positive. 
\end{proposition}
\begin{proof}
We focus on arrows to the left of the bottom pyramid (i.e. arrows $a$ such that $\tau^k a$ is in the bottom pyramid for some $k < 0$) so that we can use Figure \ref{fig:perp&par} as a visual aid again.
Other arrows are covered by dual arguments.  Firstly, any such arrow with target in the pyramid wall (labeled $b_k$ in Figure \ref{fig:perp&par}) is the top rung of a dual ladder running upwards and to the right (so, drawn upside down compared to \eqref{eq:ladder}). All arrows on the right side of this ladder are in the pyramid, either in the left wall of the bottom pyramid, or the right wall of the top pyramid.  Thus our key observation above, along with Lemma \ref{lem:typeApyramid}, imply that the arrow is positive.

Next, consider an arrow $a$ such that $\tau^{-1}a$ is in the left wall of the bottom pyramid (labeled $\tau a_k$ in Figure \ref{fig:perp&par}).
Each such arrow is the top rung of a dual ladder running downwards and to the right, which is entirely contained in the bottom pyramid except for the two arrows incident to $a$ in the ladder.  Since these were shown to be positive in the previous paragraph, we get that $a$ is positive by the same reasoning as the conclusion of the previous paragraph.

Continuing outwards in this way, the region of positive arrows expands layer-by-layer and all arrows outside of the pyramids are eventually shown to be positive.
\end{proof}

This concludes the proof for the type $\mathbb{A}$ case.

%%%%%%%%%%%%%%%%%%%%%%%%%%%%%%%%%%%%%%%%%%%%%%%%%%%%
\section{Proof of main theorem in type \texorpdfstring{$\mathbb{E}$}{E}}\label{sec:typeE}
Fortunately the overall strategy is the same as type $\mathbb{D}$.  The only significant difference is modifying Lemma \ref{lem:triplemeshcoker} to identify cokernels of certain morphisms in the triple meshes, which requires case-by-case consideration.
We use the numbering of type $\mathbb{E}$ vertices from Bourbaki:
\begin{equation}
% https://q.uiver.app/?q=WzAsOCxbMCwxLCIxIl0sWzEsMSwiMyJdLFsyLDEsIjQiXSxbMywxLCI1Il0sWzIsMCwiMiJdLFs0LDEsIjYiXSxbNSwxLCI3Il0sWzYsMSwiOCJdLFswLDEsIiIsMCx7InN0eWxlIjp7ImhlYWQiOnsibmFtZSI6Im5vbmUifX19XSxbMSwyLCIiLDAseyJzdHlsZSI6eyJoZWFkIjp7Im5hbWUiOiJub25lIn19fV0sWzIsMywiIiwwLHsic3R5bGUiOnsiaGVhZCI6eyJuYW1lIjoibm9uZSJ9fX1dLFsyLDQsIiIsMCx7InN0eWxlIjp7ImhlYWQiOnsibmFtZSI6Im5vbmUifX19XSxbMyw1LCIiLDAseyJzdHlsZSI6eyJoZWFkIjp7Im5hbWUiOiJub25lIn19fV0sWzUsNiwiIiwwLHsic3R5bGUiOnsiaGVhZCI6eyJuYW1lIjoibm9uZSJ9fX1dLFs2LDcsIiIsMCx7InN0eWxlIjp7ImhlYWQiOnsibmFtZSI6Im5vbmUifX19XV0=
\begin{tikzcd}
	&& 2 \\
	1 & 3 & 4 & 5 & 6 & 7 & 8
	\arrow[no head, from=2-1, to=2-2]
	\arrow[no head, from=2-2, to=2-3]
	\arrow[no head, from=2-3, to=2-4]
	\arrow[no head, from=2-3, to=1-3]
	\arrow[no head, from=2-4, to=2-5]
	\arrow[no head, from=2-5, to=2-6]
	\arrow[no head, from=2-6, to=2-7]
\end{tikzcd}
\end{equation}

We first establish the type $\mathbb{E}$ analogue of Lemma \ref{lem:toprows}. It is a little simpler than the type $\mathbb{D}$ case because there is only one short branch in type $\mathbb{E}$ instead of the two in type $\mathbb{D}$.

\begin{lemma}\label{lem:typeEprojarrow}
All arrows with both source and target in the $\tau$-orbits of $P_2$ and $P_4$, i.e. those through the middle of the triple meshes, are positive. 
\end{lemma}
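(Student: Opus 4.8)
The plan is to follow the proof of Lemma~\ref{lem:toprows} essentially verbatim, exploiting that type $\mathbb{E}$ has a single short branch where type $\mathbb{D}$ has two. As there, almost every arrow in the statement is half of a border sequence and hence positive by hypothesis; only one arrow---the triple-mesh arm meeting the projective (or, dually, injective) at the short-branch vertex, where the leaf's own almost split sequence is absent---is exceptional and must be handled directly.

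First I would record the border arrows. The short-branch vertex is a leaf, so for each non-projective (respectively non-injective) indecomposable in its $\tau$-orbit the almost split sequence ending (respectively starting) at it has a single indecomposable middle term, and is therefore a border sequence. Consequently every arrow through the middle of a triple mesh is a border arrow---hence positive by assumption---save for the single arm incident to the projective or injective at the short-branch vertex, where the leaf has no almost split sequence. Having only one short branch yields a single exceptional arrow, in place of the two arrows $\alpha,\beta$ of \eqref{eq:Dcase1} and \eqref{eq:Dcase2}.

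Next I would dispatch the exceptional arrow by examining the orientation of the short-branch edge, which reduces to one case up to the duality $Q\leftrightarrow Q^{\op}$. When that edge points toward the branch vertex, the concrete description of the projectives at the branch vertex (see \cite[\S2.1]{Schiffler:2014aa}) exhibits the exceptional arrow as the inclusion of a radical, giving a short exact sequence of the same shape as \eqref{eq:Dcase1seq} and \eqref{eq:Dcase2seq} whose outer terms are joined to a projective or injective by a path of border arrows already known to be positive. That positive path yields a strict inequality of slopes, and the see-saw property forces the exceptional arrow to be positive.

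The only genuine work is pinning down the correct short exact sequence, and here type $\mathbb{E}$ is easier than type $\mathbb{D}$: with a single exceptional arrow there is one configuration to verify rather than two. The remaining verification is then the same routine see-saw computation as in type $\mathbb{D}$, so I expect no conceptual obstacle beyond this bookkeeping.
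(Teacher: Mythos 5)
Your proposal is correct and follows essentially the same route as the paper's proof: all but one arrow is absorbed into a border sequence at the length-one branch, and the single exceptional arrow (at the projective, or dually injective, end of that $\tau$-orbit) is handled via the short exact sequence $0\to P_4\to P_{\mathrm{leaf}}\to I_{\mathrm{leaf}}\to 0$, a positive path from $P_{\mathrm{leaf}}$ to $I_{\mathrm{leaf}}$, and the see-saw property. If anything, your write-up is more explicit than the paper's about why every non-exceptional arrow lies in a border sequence, which the paper leaves implicit.
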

\begin{proof}
Without loss of generality, we assume we have the orientation $2\rightarrow 4$ in our quiver.
The case where we have $2\leftarrow 4$ follows by duality. 
All such arrows are already in a border sequence except the one $P_4 \to P_2$, for which we have a short exact sequence
\begin{equation}\label{eq:ProjSESE}
0\rightarrow P_4 \rightarrow P_2\rightarrow I_2\rightarrow 0.
\end{equation}
As in the type $\mathbb{D}$ case, there is still a path of positive arrows from $P_2$ to $I_2$ running through the middle of the triple meshes, and each positive by assumption, so by the see-saw property $P_4 \to P_2$ is positive as well.
\end{proof}

There are two analogues of the pyramid region we defined for type $\mathbb{D}$.  With this in mind, in type $\mathbb{E}_n$ ($n=6, 7, 8$), we define the \emph{double pyramid region}  as the full subquiver of AR($Q$) containing those vertices $x$ such that either:
\begin{itemize}
    \item there exists a path from $P_1$ to $I_1$ through $x$, or
    \item there exists a path from $P_n$ to $I_n$ through $x$.
\end{itemize}
The left wall of the double pyramid region is the full subquiver of AR($Q$) containing those vertices $x$ such that there is either:
\begin{itemize}
    \item a unique path from $P_1$ to $x$, or
    \item a unique path from $P_n$ to $x$.
\end{itemize}

The following analogue of Lemma \ref{lem:typeDpyramid} holds by a similar proof.
\begin{lemma}\label{lem:typeEpyramid}
With the setup above, every arrow in the double pyramid region is positive.
\end{lemma}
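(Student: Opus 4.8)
The plan is to follow the proof of Lemma~\ref{lem:typeDpyramid} almost verbatim, running a single induction that sweeps upward through both halves of the double pyramid at once. Fix an arrow $a\colon X\to Y$ in the double pyramid region and induct on the row of $\AR(Q)$ containing $a$, measured as its height above the relevant floor; here the floors are the $\tau$-orbits of the leaves $P_1$ and $P_n$, which lie on the boundary of $\AR(Q)$. Replacing $Q$ by $Q^{\op}$ if necessary, we may assume $a$ points downward and to the right, and by Lemma~\ref{lem:typeEprojarrow} we may assume that $X$ and $Y$ are not both in the $\tau$-orbits of $P_1$ and $P_4$; that is, $a$ is not one of the arrows through the middle of a triple mesh, as those are already positive.

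For the base case we treat the floor arrows. Since vertices $1$ and $n$ are leaves of $Q$, the meshes along their $\tau$-orbits are two-term meshes, so every floor arrow fits into an almost split sequence $0\to \tau V\to E\to V\to 0$ with $E$ indecomposable. This is a border sequence and hence positive by hypothesis, exactly as in Lemma~\ref{lem:bottomrow}. For the inductive step, if $a$ is not a floor arrow then it is the top rung of a ladder as in \eqref{eq:ladder}, and Lemma~\ref{lem:ladder} supplies a short exact sequence $0\to Z\to X\xto{a}Y\to 0$ with $Z\simeq\ker a$. The arrows along the ladder forming the path $Z\to X_0\to Y_0\to\cdots\to Y$ all lie in strictly lower rows, so they are positive by the inductive hypothesis; this yields $\mu(Z)<\mu(Y)$, and the see-saw property then forces $\mu(X)<\mu(Y)$, so $a$ is positive.

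The step that demands genuine care, and which I expect to be the main obstacle, is verifying that the ladder lying below $a$ is contained entirely in the double pyramid region, since this containment is precisely what licenses the inductive hypothesis. In type~$\mathbb{D}$ this was automatic: the single pyramid is convex and bounded below by a single border orbit. In type~$\mathbb{E}$ the double pyramid is instead a union of two pyramids glued across the branch region, so one must confirm that descending from $a$ along its ladder never leaves this union---in particular, where the ladder approaches a triple mesh or passes from one half of the region into the other, and where one must check that such an $a$ really is the top rung of a genuine ladder rather than an arrow emanating from a triple mesh. Using the explicit shape of $\AR(Q)$ near the branch point, together with $\dim\Hom_Q(P_1,I_1)=\dim\Hom_Q(P_n,I_n)=1$, this reduces to a check over the finitely many local configurations arising for $n=6,7,8$ and the allowed orientations. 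Once containment holds in each case the induction closes, and every arrow of the double pyramid is positive.
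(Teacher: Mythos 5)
Your proposal matches the paper's proof: the paper likewise disposes of the arrows through the middle of the triple meshes via Lemma \ref{lem:typeEprojarrow} together with the border-sequence hypothesis, and then simply declares that the remaining arrows are positive ``following the same proof as Lemma \ref{lem:typeDpyramid}'', i.e.\ the upward row induction with ladders, with the border sequences along the $\tau$-orbits of $P_1$ and $P_n$ as base case. The containment-in-the-double-pyramid issue you flag is not addressed explicitly in the paper either, so your write-up is, if anything, more careful than the published argument.
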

\begin{proof}
Lemma \ref{lem:typeEprojarrow} and the hypothesis of border sequences being positive give that all arrows through the center of the triple meshes are positive.
Then the rest of the arrows in the double pyramid region are positive following essentially the same proof as Lemma \ref{lem:typeDpyramid}.
\end{proof}

From here, Notation \ref{not:bi} readily generalizes to the double pyramid, so that we may refer to arrows perpendicular and parallel to the left wall.
We need to identify the cokernels of these arrows, which are done separately in the types $\mathbb{E}_6, \mathbb{E}_7, \mathbb{E}_8$.
We label arrows in sequences of triples meshes as follows.
\begin{equation}\label{eq:typeEmeshes}
% https://q.uiver.app/?q=WzAsMTUsWzAsMSwiXFxidWxsZXQiXSxbMSwyLCJcXGJ1bGxldCJdLFsyLDEsIlxcYnVsbGV0Il0sWzEsMSwiXFxidWxsZXQiXSxbMSwwLCJcXGJ1bGxldCJdLFszLDAsIlxcYnVsbGV0Il0sWzMsMSwiXFxidWxsZXQiXSxbMywyLCJcXGJ1bGxldCJdLFs0LDEsIlxcYnVsbGV0Il0sWzYsMSwiXFxidWxsZXQiXSxbNywwLCJcXGJ1bGxldCJdLFs3LDEsIlxcYnVsbGV0Il0sWzcsMiwiXFxidWxsZXQiXSxbOCwxLCJcXGJ1bGxldCJdLFs5LDFdLFswLDEsImRfMSIsMl0sWzEsMiwiZF8xJyJdLFswLDNdLFszLDJdLFswLDQsImNfMSJdLFs0LDIsImNfMSciXSxbMiw1LCJjXzIiXSxbMiw2XSxbMiw3LCJkXzIiXSxbNyw4LCJkXzInIiwyXSxbNiw4XSxbNSw4LCJjXzInIl0sWzksMTAsImNfayJdLFs5LDExXSxbOSwxMiwiZF9rIiwyXSxbMTIsMTMsImRfayciLDJdLFsxMSwxM10sWzEwLDEzLCJjX2snIl0sWzgsOSwiIiwxLHsic3R5bGUiOnsiYm9keSI6eyJuYW1lIjoiZG90dGVkIn0sImhlYWQiOnsibmFtZSI6Im5vbmUifX19XSxbMTMsMTQsIiIsMSx7InN0eWxlIjp7ImJvZHkiOnsibmFtZSI6ImRvdHRlZCJ9LCJoZWFkIjp7Im5hbWUiOiJub25lIn19fV1d
\begin{tikzcd}
	& \bullet && \bullet &&&& \bullet \\
	\bullet & \bullet & \bullet & \bullet & \bullet && \bullet & \bullet & \bullet & {} \\
	& \bullet && \bullet &&&& \bullet
	\arrow["{d_1}"', from=2-1, to=3-2]
	\arrow["{d_1'}", from=3-2, to=2-3]
	\arrow[from=2-1, to=2-2]
	\arrow[from=2-2, to=2-3]
	\arrow["{c_1}", from=2-1, to=1-2]
	\arrow["{c_1'}", from=1-2, to=2-3]
	\arrow["{c_2}", from=2-3, to=1-4]
	\arrow[from=2-3, to=2-4]
	\arrow["{d_2}", from=2-3, to=3-4]
	\arrow["{d_2'}"', from=3-4, to=2-5]
	\arrow[from=2-4, to=2-5]
	\arrow["{c_2'}", from=1-4, to=2-5]
	\arrow["{c_k}", from=2-7, to=1-8]
	\arrow[from=2-7, to=2-8]
	\arrow["{d_k}"', from=2-7, to=3-8]
	\arrow["{d_k'}"', from=3-8, to=2-9]
	\arrow[from=2-8, to=2-9]
	\arrow["{c_k'}", from=1-8, to=2-9]
	\arrow[dotted, no head, from=2-5, to=2-7]
	\arrow[dotted, no head, from=2-9, to=2-10]
\end{tikzcd}
\end{equation}
We assume AR($Q$) is drawn in the plane so that, with the standard identification of $Q^{\op}$ as a subquiver of AR($Q$) whose vertices are the projectives, the branch $1 - 3 - 4$ is towards the top.  In other words, either the $c_i$ or $c_i'$ arrows are $\tau$-translates of the irreducible morphism $P_4\to P_3$ or $P_3\to P_4$ (depending on the orientation of $Q$).

Now tedious but straightforward calculations as in \eqref{eq:Dcokercalc} yield the following lemma.
\begin{lemma}
For a sequence of triple meshes in AR($Q$) as shown in \eqref{eq:typeEmeshes}, we have:
\begin{equation}
    \coker c_1 = 
    \begin{cases}
    d_4' & \text{for }Q \text{ of type }\mathbb{E}_6\\
    c_7' & \text{for }Q \text{ of type }\mathbb{E}_7\\
    c_{13}' & \text{for }Q \text{ of type }\mathbb{E}_8\\
    \end{cases}
\qquad \text{and} \quad
    \coker d_1 = 
    \begin{cases}
    c_4' & \text{for }Q \text{ of type }\mathbb{E}_6\\
    d_6' & \text{for }Q \text{ of type }\mathbb{E}_7\\
    d_{11}' & \text{for }Q \text{ of type }\mathbb{E}_8\\
    \end{cases}
\end{equation}
\end{lemma}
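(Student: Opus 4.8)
The plan is to run the strategy of Lemma \ref{lem:triplemeshcoker} essentially verbatim, replacing the single triple mesh there with the longer chain of triple meshes displayed in \eqref{eq:typeEmeshes}. First I would record that $c_1$ and $d_1$ are irreducible morphisms emanating from a common vertex $A$, say $c_1\colon A\to B$ and $d_1\colon A\to B'$, so by \cite[Cor.~5.2.7]{DWbook} each is either a monomorphism with indecomposable cokernel or an epimorphism with indecomposable kernel. Exactly as in the type $\mathbb{D}$ argument, I would postpone deciding which alternative holds: the dimension-vector computation will exhibit $\ddim B - \ddim A$ (resp.\ $\ddim B' - \ddim A$) as the dimension vector of an honest indecomposable, hence a nonzero nonnegative vector, which rules out the surjective case and forces $c_1$ and $d_1$ to be injective. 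Since indecomposables over a Dynkin quiver are determined by their dimension vectors, it then suffices to check that $\ddim\coker c_1$ and $\ddim\coker d_1$ coincide with the dimension vectors of $\coker d_4'$, $\coker c_4'$, and so on, according to the type.

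The engine of the proof is the telescoping computation of \eqref{eq:Dcokercalc}. Starting from $\ddim\coker c_1 = \ddim B - \ddim A$, I would repeatedly apply the additivity relation $\ddim\tau W + \ddim W = \sum_i \ddim U_i$ coming from each mesh $0\to\tau W\to\bigoplus_i U_i\to W\to 0$, each substitution trading a term for terms lying further to the right in \eqref{eq:typeEmeshes}. Because AR($Q$) is drawn with the short branch $1-3-4$ towards the top, the $c$-arrows and $d$-arrows interlock in a predictable way and each rewriting step consumes exactly one mesh. Carrying this out, the cokernel dimension vector eventually agrees with that of a later arrow in the chain; reading off where this first occurs produces the stated target ($d_4'$ or $c_4'$ in type $E_6$, $c_7'$ or $d_6'$ in type $E_7$, $c_{13}'$ or $d_{11}'$ in type $E_8$). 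The same identification then plays the role that $\coker b_4 = I_n$ played in type $\mathbb{D}$, feeding the subsequent perpendicular- and parallel-arrow lemmas.

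The step requiring the most care, and the reason the statement splits into three cases, is that the length of the telescoping chain, and hence the index at which the cokernel stabilizes, is governed by the global combinatorics of $\AR(Q)$ in each type and admits no uniform closed form across $E_6$, $E_7$, $E_8$. Concretely, the number of triple meshes separating the source of $c_1$ (resp.\ $d_1$) from the mesh carrying the matching arrow differs by type, so I would treat the three types as separate but entirely mechanical bookkeeping computations, each verifiable by hand or via \cite{QPA}. The only genuinely new input beyond the type $\mathbb{D}$ proof is carefully tracking the two distinct $\tau$-orbits running through the middles of the triple meshes, so that the $c$- and $d$-substitutions are never conflated; once this is set up, each case reduces to the same mesh-relation arithmetic as in \eqref{eq:Dcokercalc}.
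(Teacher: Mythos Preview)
Your proposal is correct and matches the paper's approach exactly: the paper offers no argument beyond the sentence ``tedious but straightforward calculation as in \eqref{eq:Dcokercalc} yields the following lemma,'' and what you have written is precisely a blueprint for carrying out that calculation, mesh by mesh, in each of the three types. Your remarks about using \cite[Cor.~5.2.7]{DWbook} to rule out the surjective alternative and about tracking the two $\tau$-orbits through the triple meshes are appropriate elaborations of details the paper leaves implicit.
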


With this, we can complete the proof by considering arrows perpendicular to, then parallel to the double pyramid region using essentially the same proof as in type $\mathbb{D}$.

\begin{lemma}\label{lem:pyramidperpE}
Each arrow perpendicular to the left pyramid wall is positive.
\end{lemma}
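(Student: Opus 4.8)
The plan is to mirror the type $\mathbb{D}$ argument, following Lemmas \ref{lem:pyramidperp} and \ref{lem:typeDperp} almost verbatim, with the cokernel computation just established playing the role that Lemma \ref{lem:triplemeshcoker} played in type $\mathbb{D}$. The genuinely new feature is that the branch point now feeds two $\tau$-orbits through each triple mesh, so there are two families of perpendicular arrows to treat, namely the $c_i$ and the $d_i$ of \eqref{eq:typeEmeshes}; I would handle these in parallel, and only the left wall needs attention since the right wall is obtained by duality.

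For the base perpendicular arrows $c_1$ and $d_1$, the preceding lemma identifies $\coker c_1$ and $\coker d_1$ with the cokernels of specific right-hand (primed) arrows, the exact index and family depending on the type ($E_6$, $E_7$, $E_8$). Tracing this common cokernel down the wall with the dual of Lemma \ref{lem:ladder}, exactly as at the close of the proof of Lemma \ref{lem:triplemeshcoker}, identifies it with an injective apex of the double pyramid. I would thereby record short exact sequences $0 \to A \xto{c_1} B \to C \to 0$ and $0 \to A' \xto{d_1} B' \to C' \to 0$ in which $B, B'$ lie on the left wall of the double pyramid while $C, C'$ lie on its right wall. Since both endpoints are in the double pyramid region, Lemma \ref{lem:typeEpyramid} furnishes a positive path from $B$ to $C$ (and from $B'$ to $C'$), giving $\mu(B) < \mu(C)$; the see-saw property then forces $\mu(A) < \mu(B)$, so $c_1$, and likewise $d_1$, is positive.

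To obtain \emph{every} arrow perpendicular to the left wall, I would then run the induction of Lemma \ref{lem:typeDperp} without change: applying $\tau^k$ to the sequences above yields short exact sequences whose cokernels $\tau^k C$, $\tau^k C'$ remain inside the double pyramid, and the inductive hypothesis supplies a positive path of already-known perpendicular arrows leading from the target out to the pyramid wall, which concatenates with a positive path inside the pyramid (Lemma \ref{lem:typeEpyramid}); the see-saw property again yields positivity. \textbf{The main obstacle} is the type-dependent bookkeeping compressed into ``tracing the cokernel to the right wall'': one must check, separately for $E_6$, $E_7$, and $E_8$ and separately for the two orbits, that the object produced by the preceding lemma genuinely sits on a right wall of the double pyramid, so that a positive path to it is available---not merely that the dimension-vector identity holds. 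Once this placement is confirmed in each case, the remaining see-saw bookkeeping is identical to type $\mathbb{D}$.
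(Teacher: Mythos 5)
Your proposal matches the paper's proof: the base arrows (those with target on the wall) are handled via the cokernel identification from the preceding lemma, a positive path inside the double pyramid supplied by Lemma \ref{lem:typeEpyramid}, and the see-saw property, after which the $\tau$-translates are treated by the induction of Lemma \ref{lem:typeDperp} verbatim. The ``main obstacle'' you flag---confirming in each type that the identified cokernel actually sits in the double pyramid---is exactly the point the paper addresses by noting that in type $E_6$ the cokernel lands on the opposite side of the triple mesh but remains an injective inside the double pyramid.
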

\begin{proof}
The analogue of Lemma \ref{lem:pyramidperp} holds by a similar proof, except that in type $\mathbb{E}_6$ the cokernel of each such arrow is located on the opposite side of the triple mesh, still within the double pyramid and still an injective representation. Since every arrow in the double pyramid is positive by Lemma \ref{lem:typeEpyramid}, we still have a positive path from the middle term of each such short exact sequence analogous to those in \eqref{eq:pyramidperp} to the corresponding injective cokernel.  This handles the first layer of arrows perpendicular to the pyramid wall, those whose targets are in the wall.
The remaining perpendicular arrows are $\tau$-translates of these, and the same proof as Lemma \ref{lem:typeDperp} works exactly as written there.
\end{proof}

Finally, the analogue of Lemma \ref{lem:typeDparallel}, that all arrows parallel to the left wall of the double pyramid are positive, holds by essentially the same proof since it only uses ladder regions.  Thus we have completed the proof in the type $\mathbb{E}$ cases.

\bibliographystyle{alpha}
\bibliography{totalstability}

\end{document}